
\documentclass[11pt]{amsart}
\usepackage{amssymb,pstricks,pst-plot}
\parskip=\smallskipamount

\newcommand{\B}{\mathbb{B}}
\newcommand{\C}{\mathbb{C}}
\newcommand{\D}{\mathbb{D}}
\newcommand{\clD}{{\overline{\,\D}}}

\newcommand{\N}{\mathbb{N}}

\renewcommand{\P}{\mathbb{P}}
\newcommand{\R}{\mathbb{R}}

\newcommand{\bT}{\mathbb{T}}

\renewcommand{\d}{\mathrm{d}}

\newcommand\E{\mathrm{e}}
\newcommand\I{\mathrm{i}}

\newcommand{\cA}{\mathcal{A}}
\newcommand{\cB}{\mathcal{B}}

\newcommand{\cF}{\mathcal{F}}

\newcommand{\cL}{\mathcal{L}}

\newcommand{\cO}{\mathcal{O}}

\newcommand\wt{\widetilde}

\newcommand{\Psh}{\mathrm{Psh}}

\newcommand{\ol}{\overline}

\newcommand\wh{\widehat}
\newcommand\reg{\mathrm{reg}}
\newcommand\sing{\mathrm{sing}}
\newcommand\bs{\backslash}

\newtheorem{thm}{Theorem}[section]

\newtheorem{lem}[thm]{Lemma}
\newtheorem{prob}[thm]{Problem}



\theoremstyle{definition}
\newtheorem{defin}[thm]{Definition}
\newtheorem{rem}[thm]{Remark}
\newtheorem{exa}[thm]{Example}


\begin{document}





\title[Disc functionals and Siciak-Zaharyuta extremal functions]
{Disc functionals and Siciak-Zaharyuta extremal functions on singular varieties} 

\author[B. Drinovec Drnov\v sek]{Barbara Drinovec Drnov\v sek}
\address{Faculty of Mathematics and Physics\\ University of Ljubljana\\
Institute of Mathematics, Physics and Mechanics\\
Jadranska 19, 1000 Ljubljana, Slovenia}
\email{barbara.drinovec@fmf.uni-lj.si}

\author[F. Forstneri\v c]{Franc Forstneri\v c}
\address{Faculty of Mathematics and Physics\\ University of Ljubljana\\
Institute of Mathematics, Physics and Mechanics\\
Jadranska 19, 1000 Ljubljana, Slovenia}
\email{franc.forstneric@fmf.uni-lj.si}

\date{\today}

\begin{abstract}
We establish plurisubharmonicity of envelopes 
of certain classical disc functionals on locally irreducible 
complex spaces, thereby generalizing the corresponding results for complex manifolds.
We also find new formulae expressing the Siciak-Zaharyuta extremal function
of an open set in a locally irreducible affine algebraic variety 
as the envelope of certain disc functionals, 
similarly to what has been done for open sets in $\C^n$ 
by Lempert and by L\'arusson and Sigurdsson. 
\end{abstract}

\subjclass{Primary 32U05; Secondary 32H02, 32E10}  

\keywords{Complex spaces, plurisubharmonic function, disc functional, 
Siciak-Zaharyuta extremal functions}

\maketitle

{\small \rightline{\em  Dedicated to J\'ozef Siciak on the occasion of his 80th birthday}}

\section{Introduction}
\label{sec:introduction}
Let $\D=\{\zeta\in \C\colon |\zeta|< 1\}$ be the open unit disc in 
the complex plane $\C$, and let $\bT=b\D=\{\zeta\in\C\colon |\zeta|=1\}$ 
be its boundary circle.

Let $X$ be a (reduced, paracompact) complex space.
Denote by $\Psh(X)$ the set of all plurisubharmonic functions on $X$.
(For convenience we agree that the function which is identically 
equal to $-\infty$ is also plurisubharmonic.) 
Let $\cO(\clD,X)$ be the set of all maps $f\colon \clD \to X$ 
that are holomorphic in an open neighborhood $U_f\subset \C$ 
of the closed disc $\clD$ in $\C$. Given a point $x\in X$, set 
$\cO(\clD,X,x)=\{f\in \cO(\clD,X)\colon f(0)=x\}$;
these are holomorphic discs in $X$ centered at $x$.
A {\em disc functional} on $X$ is a function 
\[
	H_X \colon \cO(\clD,X) \to\ol\R= [-\infty,+\infty]. 
\]
The {\em envelope} of $H_X$ is the function $EH_X\colon X\to\ol\R$
defined by 
\begin{equation}
\label{eq:envelope}
		EH_X(x)= \inf \left\{ H_X(f)\colon f\in \cO(\clD,X,x),\    
		f(\D)\not\subset X_{\rm sing}	\right\}, \quad x\in X.	
\end{equation}
Occasionally we consider disc functionals as functions on the larger class 
$\cA_X=\cA(\D,X)$ of discs $\ol\D \to X$ that are holomorphic in 
$\D$ and continuous on $\clD$. For all functionals treated in this paper, 
their envelope over $\cA_X$ coincides with 
the envelope (\ref{eq:envelope}) over the subclass $\cO(\clD,X)$ of $\cA_X$.

The theory of disc functionals, initiated by 
Poletsky in the late 1980s \cite{Poletsky1991}, 
offers a different approach to certain 
extremal functions of pluripotential theory.
(For the latter subject see Klimek \cite{Klimek}.)
In several natural examples, the envelope of a disc functional is 
a plurisubharmonic function. 
Furthermore, extremal plurisubharmonic functions are usually 
defined as suprema of classes of plurisubharmonic functions 
with certain properties, and many of them are 
envelopes of appropriate disc functionals. 
As was pointed out by Poletsky in \cite{Poletsky2002},
one may view this subject as an extension of 
Kiselman's minimum principle \cite{Kiselman1978}.

In this paper we extend results on plurisubharmonicity
of certain classical disc functionals,  obtained 
by various authors in the manifold case (i.e., 
when the underlying space $X$ is nonsingular),
to complex spaces with singularities. 

One of the most important disc functionals
is the {\em Poisson functional}  which associates to an  
upper semicontinuous function $u$ on $X$ and 
an analytic disc $f\in\cA_X$ the average 
of the function $u\circ f$ over the circle $\bT=b\D$. A fundamental
result of Poletsky is that the envelope of the Poisson functional
on domains in $\C^n$ is always plurisubharmonic.
In \S\ref{sec:Poisson} we give another proof of our 
result from \cite{DF2} on plurisubharmonicity of 
Poisson functionals on any locally irreducible complex space, 
reducing it to Rosay's theorem in the manifold case by using 
Hironaka desingularization. The same proof also applies to
Riesz and Lelong functionals (see \S \ref{sec:Poisson}
for the definitions); so we obtain the following result:

\begin{thm}
\label{thm_functionals}
Let $X$ be an irreducible and locally irreducible complex space, 
and let $H_X \colon \cO(\clD,X) \to\ol\R$ 
be one of the following disc functionals:
\begin{itemize}
\item[\rm (i)]   $P_u$, the Poisson functional corresponding 
to an upper semicontinuous function $u$ on $X$ (see (\ref{eq:Poisson})
in \S\ref{sec:Poisson});
\item[\rm (ii)]  $R_u$, the Riesz functional corresponding 
to a plurisubharmonic function $u$ on $X$ (see (\ref{eq:Riesz}) in \S\ref{sec:Poisson});
\item[\rm (iii)] $L_{\alpha}$, the Lelong functional,  or $\wt L_\alpha$, 
the reduced Lelong functional, associated to a nonnegative 
function $\alpha$ on $X$ (see (\ref{eq:Lelong}) in \S\ref{sec:Poisson}).
\end{itemize}
Then the envelope $EH_X$ (\ref{eq:envelope}) is a
plurisubharmonic function on $X$.
\end{thm}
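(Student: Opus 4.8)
The plan is to reduce the statement to the corresponding results on complex \emph{manifolds} -- Rosay's theorem for the Poisson functional and its analogues for the Riesz and Lelong functionals -- by passing to a desingularization. We may assume $X$ is connected. First I would compose the normalization $\nu\colon\wh X\to X$ (a homeomorphism, since $X$ is locally irreducible) with a Hironaka desingularization $\sigma\colon\wt X\to\wh X$ to obtain a proper holomorphic modification $\pi=\nu\circ\sigma\colon\wt X\to X$, where $\wt X$ is a complex manifold, $\pi$ restricts to a biholomorphism over $X_{\reg}$, and $E:=\pi^{-1}(X_{\sing})$ is a proper analytic subset of $\wt X$; by Zariski's connectedness theorem for $\sigma$ and bijectivity of $\nu$, every fibre $\pi^{-1}(x)$ is compact and connected. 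The second ingredient is a lifting lemma: every $f\in\cO(\clD,X)$ with $f(\D)\not\subset X_{\sing}$ lifts uniquely to $\wt f\in\cO(\clD,\wt X)$ with $\pi\circ\wt f=f$. Indeed, $f^{-1}(X_{\sing})$ is a discrete subset of a neighbourhood of $\clD$, the lift exists and is unique over its complement since $\pi$ is biholomorphic over $X_{\reg}$, and it extends across $f^{-1}(X_{\sing})$ because $\pi$ is proper and $\wt X$ is smooth (with $\wh X$ normal). Conversely any $\wt g\in\cO(\clD,\wt X)$ descends to $\pi\circ\wt g\in\cO(\clD,X)$, and $(\pi\circ\wt g)(\D)\not\subset X_{\sing}$ precisely when $\wt g(\D)\not\subset E$.

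Next I would pull back the defining data. Replacing $u$ by $u\circ\pi$ (which remains upper semicontinuous, resp.\ plurisubharmonic) and $\alpha$ by $\alpha\circ\pi$ (which remains nonnegative) produces on $\wt X$ the corresponding functional $\wt H$, and in each of the three cases $\wt H$ is $\pi$-compatible: $\wt H(\wt f)=H_X(f)$ whenever $\pi\circ\wt f=f$, since all these functionals are built only from $u\circ f$ (resp.\ $\alpha\circ f$) together with the intrinsic data of the disc. By the manifold results, $E\wt H$ is plurisubharmonic on $\wt X$. Moreover $E\wt H$ is unchanged if one restricts to discs $\wt g$ with $\wt g(\D)\not\subset E$: any disc can be perturbed (keeping its centre fixed) to one whose image is not contained in the proper analytic set $E$, and $\wt H$ is upper semicontinuous under uniform convergence of discs -- for the Poisson functional this is the reverse Fatou lemma, and for the Riesz and Lelong functionals it is part of the manifold theory.

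The crucial observation is that $E\wt H$ is constant on each fibre $\pi^{-1}(x)$: its restriction there is a plurisubharmonic function on a compact connected complex space, hence constant by the maximum principle. Therefore $E\wt H$ descends to a function $v\colon X\to\ol\R$ with $v\circ\pi=E\wt H$, and $v$ is upper semicontinuous since $\pi$ is a proper, hence closed, map. I claim $EH_X=v$. On the one hand, lifting any competitor $f$ for $EH_X(x)$ to $\wt f$ with $\wt f(0)\in\pi^{-1}(x)$ gives $H_X(f)=\wt H(\wt f)\ge E\wt H(\wt f(0))=v(x)$, so $EH_X\ge v$. On the other hand, fixing $\wt x\in\pi^{-1}(x)$ and choosing, by the previous paragraph, a near-optimal disc $\wt g$ for $E\wt H(\wt x)$ with $\wt g(\D)\not\subset E$, its projection $\pi\circ\wt g$ is a legitimate competitor for $EH_X(x)$ (the condition ``not contained in $X_{\sing}$'' holds automatically when $x\in X_{\reg}$ and is guaranteed by the choice of $\wt g$ when $x\in X_{\sing}$) with the same value of the functional, so $EH_X\le v$.

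It remains to check that $v$ is plurisubharmonic on $X$. Over $X_{\reg}$ this is clear, as $\pi$ is biholomorphic there and $E\wt H$ is plurisubharmonic. For $x\in X_{\sing}$, plurisubharmonicity of $E\wt H$ on $\wt X$ forces $E\wt H$ to equal, at each point of the hypersurface $E$, the upper regularization of its restriction to $\wt X\setminus E$; combined with constancy of $E\wt H$ on the compact fibre $\pi^{-1}(x)$ this yields $v(x)=\limsup_{X_{\reg}\ni y\to x}v(y)$. Since $v|_{X_{\reg}}$ is plurisubharmonic and locally bounded above ($E\wt H$ is locally bounded above and $\pi$ is proper), the Forn{\ae}ss--Narasimhan extension theorem shows precisely that the function obtained from $v|_{X_{\reg}}$ by this $\limsup$ formula is plurisubharmonic on $X$; that function is $v=EH_X$. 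I expect the main obstacle to be the perturbation step ensuring that near-optimal discs on $\wt X$ can be chosen so as not to be contained in the exceptional set $E$ -- so that they descend to genuine competitors on $X$ -- which is where one must invoke, case by case, the upper semicontinuity properties of the Poisson, Riesz and Lelong functionals established in the manifold setting.
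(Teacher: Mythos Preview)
Your approach is essentially the paper's: desingularize, lift discs (the paper's Lemma~\ref{lifting}), pull back the functional to the smooth model, invoke the manifold case, use constancy of the plurisubharmonic envelope on compact connected fibres, then extend across $X_{\sing}$ (the paper invokes Demailly's theorem rather than Forn{\ae}ss--Narasimhan, and does not precompose with the normalization, but these are cosmetic). One small point to tighten: your $\pi$-compatibility claim $\wt H(\wt f)=H_X(\pi\circ\wt f)$ can fail for the \emph{unreduced} Lelong functional $L_\alpha$, since the multiplicity $m_{\pi\circ\wt f}(z)$ need not equal $m_{\wt f}(z)$ when $\wt f(z)$ lies in the exceptional set; the paper sidesteps this by proving the desingularization argument only for $\wt L_\alpha$ (which is trivially compatible, being built from $\alpha\circ f$ alone) and then recovering $EL_\alpha$ via the equality $EL_\alpha=E\wt L_\alpha$ established separately.
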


The assumption of local irreducibility can not be omitted. 
In \S\ref{sec:Poisson} we give an example of an 
irreducible complex curve with a single double point
such that the envelopes of the above functionals,
corresponding to appropriately chosen functions, 
are not plurisubharmonic at that point.

On a complex manifold $X$, the envelopes of the disc functionals mentioned above 
are the following extremal plurisubharmonic functions: 
\begin{itemize}
\item
The envelope $EP_u$ of the Poisson functional 
is the largest plurisubharmonic minorant of the 
upper semicontinuous function $u$.
\item  
The envelope $ER_u$ of the Riesz functional is
\[
	ER_u = \sup\{ v\in\Psh(X) \colon v\le 0,\ \d\d^c v\ge \d\d^c u\},
\]
the largest nonpositive plurisubharmonic function on $X$ 
whose Levi form is bounded below by the 
Levi form of $u$ \cite{Larusson-Sigurdsson1998}.
\item 
The envelope $EL_\alpha$ of the Lelong functional 
is the largest nonpositive plurisubharmonic function whose Lelong
number at each point $x\in X$ is $\ge \alpha(x)$ 
(see \S\ref{sec:Lelong} below).
\end{itemize}

In \S\ref{sec:Lelong} we give a new treatment 
of the Lelong functional, simplifying  
the proof of plurisubharmonicity of its envelope 
that was given by L\'arusson and Sigurdsson in 
\cite{Larusson-Sigurdsson1998,Larusson-Sigurdsson2003}.
The key point is obtained by the method of gluing 
holomorphic sprays of discs, 
similarly to what was done in \cite{DF2} 
for the Poisson functional. Our proof also applies 
to locally irreducible complex spaces without having 
to use the desingularization theorem.

In \S\ref{sec:Siciak} we find a formula expressing 
the Siciak-Zaharyuta  maximal function $V_{\Omega,X}$ 
of a nonempty open set $\Omega$ in a locally irreducible 
affine algebraic variety $X\subset\C^n$ as the envelope of 
appropriate Poisson functionals, obtained from Green functions
on complex curves in $X$ with boundaries in $\Omega$. 
For open sets in $X=\C^n$ such formulas 
have been obtained by Lempert (in the case when $\Omega$ is convex)
and by L\'arusson and Sigurdsson.

%
\section{Plurisubharmonicity of envelopes of disc functionals}
\label{sec:Poisson}
Let $X$ be a complex space. Given an upper semicontinuous function 
$u\colon X\to \R\cup\{-\infty\}$, the associated 
{\em Poisson functional}  is defined by
\begin{equation}
\label{eq:Poisson}
	P_u(f)= \frac{1}{2\pi} \int^{2\pi}_0 u(f(\E^{\I t}))\, \d t,
	\qquad f\in \cO(\clD,X).
\end{equation}

Let $u\colon X\to \R\cup\{-\infty\}$ be a plurisubharmonic function. 
The associated {\em Riesz functional}  is given by
\begin{equation}
\label{eq:Riesz}
	R_u(f)= \frac{1}{2\pi}  \int_{\D} \log|\cdotp| \, \Delta (u\circ f),
	\qquad f\in\cO(\clD,X). 
\end{equation}
The Laplacian $\Delta g$ of the subharmonic function 
$g=u\circ f$ is a positive Borel measure on $\clD$.
There is a close connection with the Poisson functional
which derives from the following {\em Riesz representation formula} on the disc:
\[
	g(0) = \frac{1}{2\pi} \int_0^{2\pi} g(\E^{\I t})\, \d t 
			+ \frac{1}{2\pi} \int_{\D} \log|\cdotp|\, \Delta g.
\]
Applying this to the function $g=u\circ f$ on $\ol\D$, where $u$ 
and $f$ are as in (\ref{eq:Riesz}), we obtain
\[
	u(f(0)) = P_u(f) + R_u(f).
\]
Setting $x=f(0)\in X$, this can be rewritten as 
$R_u(f)=u(x) + P_{-u}(f)$. Taking the infimum over all
$f\in\cO(\clD,X,x)$ yields  the following
relation between the Riesz and the Poisson envelope:
\begin{equation}
	ER_u = u + EP_{-u}. 
\label{eq_Riesz_Poisson}
\end{equation}
Therefore, to prove that $ER_u$ is plurisubharmonic, 
we need to show that $EP_{-u}<\infty$ and that $EP_{-u}$ 
is plurisubharmonic on $X$.

If $v\le 0$ is a plurisubharmonic function on $X$ such that 
$\d\d^c v\ge \d\d^c u$, then for every disc $f\in\cO(\ol \D,X)$ we have 
$\triangle (v\circ f)  \ge \triangle (u\circ f)$,
and hence $R_v(f)\le R_u (f)$.
Since $P_v(f)\le 0$, the Riesz formula gives 
\[
	v(f(0)) = P_v(f) + R_v(f) \le R_v(f) \le R_u(f).	 
\]
By taking the infimum over all discs with a given center we get
$v\le ER_u$. Once we know that $ER_u$ is plurisubharmonic,
it follows that it is the biggest plurisubharmonic 
function $v\le 0$ satisfying $\d\d^c v \ge \d\d^c u$.

The {\em Lelong functional} associated to a 
nonnegative real function $\alpha$ on $X$ is defined by
\begin{equation}
\label{eq:Lelong}
	L_\alpha(f)= \sum_{z\in \D} \alpha(f(z))\, m_f(z) \log|z|,
	\qquad f\in\cO(\clD,X),
\end{equation}
where $m_f(z)$ denotes the multiplicity of $f$ at $z$. 
We get the {\em reduced Lelong functional}, $\wt L_\alpha$, 
by removing the multiplicities $m_f(z)$ from the above formula.

Plurisubharmonicity of envelopes of these functionals 
on domains in $\C^n$ was established by Poletsky \cite{Poletsky1991,Poletsky1993};
similar results for the Poisson functional were found by 
Bu and Schachermayer \cite{Bu-Schachermayer}.
Poletsky's theorem was extended to all complex manifolds for the Poisson functional
by Rosay \cite{Rosay1,Rosay2} (see also Edigarian \cite{Edigarian2003-2}), 
and then for the other functionals mentioned above by L\'arusson and Sigurdsson 
\cite{Larusson-Sigurdsson1998,Larusson-Sigurdsson2003} and 
Edigarian \cite{Edigarian2003}.
The envelope of the Lelong functional coincides 
with the envelope of the corresponding 
reduced Lelong functional \cite{Larusson-Sigurdsson2003};
see Theorem \ref{thm:Lelong} below. 

In \cite{DF2} we proved that the envelope of the Poisson functional
is plurisubharmonic if  $X$ is a locally irreducible complex space.

%
%
%
%

\begin{thm}
\label{Poletsky-Rosay}
{\rm \cite[Theorem 1.1]{DF2}}
Let $X$ be a locally irreducible complex space
and $u\colon X\to \R\cup\{-\infty\}$ an upper semicontinuous 
function. Then the envelope 
\begin{equation}
\label{eq:Poisson-funct}
   \wh u(x) = \inf \Big\{\int^{2\pi}_0 u(f(\E^{\I t}))\, 
   \frac{\d t}{2\pi} \colon \ f\in \cO(\clD,X,x) \Big\},
    \quad x\in X,
\end{equation}
of the Poisson functional $P_u$ is the largest plurisubharmonic minorant of $u$. 
\end{thm}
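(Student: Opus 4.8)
The plan is to reduce Theorem~\ref{Poletsky-Rosay} to the known manifold case (Rosay's theorem, as re-proved by Edigarian) via Hironaka desingularization, a strategy parallel to what is done for the Poisson functional in \cite{DF2}. Let $\pi\colon \wt X\to X$ be a resolution of singularities, so that $\wt X$ is a complex manifold and $\pi$ is a proper holomorphic surjection restricting to a biholomorphism over $X\setminus X_{\sing}$. Since $X$ is \emph{locally} irreducible, the fibre $\pi^{-1}(x)$ is connected for every $x\in X$ (this is the key point where local irreducibility enters; it fails at a node). Set $\wt u=u\circ\pi$, an upper semicontinuous function on $\wt X$, and let $\wh{\wt u}$ be its Poisson envelope on $\wt X$, which by Rosay's theorem is the largest plurisubharmonic minorant of $\wt u$ on $\wt X$.

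The main steps are then as follows. First I would establish the inequality $\wh u\circ\pi\le \wh{\wt u}$ by lifting discs: any disc $\wt f\in\cO(\clD,\wt X,\wt x)$ with $\wt f(\D)\not\subset\pi^{-1}(X_{\sing})$ projects to $f=\pi\circ\wt f\in\cO(\clD,X,\pi(\wt x))$ with $f(\D)\not\subset X_{\sing}$ and $P_{\wt u}(\wt f)=P_u(f)$, so taking infima gives $\wh u(\pi(\wt x))\le\wh{\wt u}(\wt x)$. Conversely, a disc $f\in\cO(\clD,X,x)$ with $f(\D)\not\subset X_{\sing}$ lifts through $\pi$ to a disc in $\wt X$ after possibly normalizing and using that $\clD$ is simply connected — more precisely, $f$ restricted to a smaller disc lifts, and one uses that the envelope does not change under shrinking the disc (reparametrize $\zeta\mapsto r\zeta$). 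Combining the two inequalities yields $\wh u\circ\pi=\wh{\wt u}$ on $\wt X\setminus\pi^{-1}(X_{\sing})$, and since both sides are ``nice'' one wants this on all of $\wt X$.

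Next I would use the connectedness of the fibres together with the maximum principle to descend plurisubharmonicity: because $\wh{\wt u}$ is plurisubharmonic on $\wt X$ and constant on each connected fibre $\pi^{-1}(x)$ (a compact connected analytic set, on which a plurisubharmonic function attains its max hence is constant — here I would invoke the argument from \cite{DF2} that $\wh{\wt u}=\wh u\circ\pi$ combined with $\wh u$ being well-defined forces constancy, or directly that $\wh{\wt u}$ restricted to the fibre is bounded above and plurisubharmonic), the function $\wh u$ on $X$ defined by $\wh u(x)=\wh{\wt u}(\wt x)$ for any $\wt x\in\pi^{-1}(x)$ is well-defined, and a standard descent lemma for plurisubharmonic functions under proper holomorphic maps with connected fibres shows $\wh u\in\Psh(X)$. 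Then $\wh u\le u$ is immediate, and if $v\in\Psh(X)$ with $v\le u$ then $v\circ\pi\in\Psh(\wt X)$ with $v\circ\pi\le\wt u$, so $v\circ\pi\le\wh{\wt u}=\wh u\circ\pi$, hence $v\le\wh u$; thus $\wh u$ is the largest plurisubharmonic minorant of $u$.

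The hard part will be the disc-lifting in the second inequality: a holomorphic disc $f\colon\clD\to X$ need not lift to $\wt X$ directly, because $f$ may hit $X_{\sing}$ and the preimage $\pi^{-1}(f(\clD))$ can be complicated. The remedy is to lift $f\vert_{\D'}$ for $\D'\ss\D$ after passing to the normalization of the pulled-back analytic disc and using that $\pi$ is an isomorphism over the dense open set $X\setminus X_{\sing}$ which $f(\D)$ meets; one shows the closure of the graph over $X\setminus X_{\sing}$ is the desired lift, using properness of $\pi$ and the fact that $\clD'$ is compact. A second delicate point is justifying that $\wh{\wt u}$ is genuinely constant on fibres and that the descended function is the \emph{largest} minorant rather than merely \emph{a} plurisubharmonic minorant; here one leans on the characterization already available from \cite{DF2} for the Poisson envelope and the equality $\wh u\circ\pi=\wh{\wt u}$ established above.
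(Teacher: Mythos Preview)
Your proposal is correct and follows essentially the same route as the paper: reduce to Rosay's theorem on the desingularization $\pi\colon M\to X$, identify $\wh u\circ\pi$ with the Poisson envelope of $u\circ\pi$ via disc lifting/projection, and use connectedness of the fibres (from local irreducibility) to descend plurisubharmonicity. Two places where the paper is cleaner than your sketch: the disc lift works on all of $\clD$ without shrinking (Lemma~\ref{lifting} uses the proper transform of the image curve, exactly your ``closure of the graph'' idea), and the descent across $X_{\sing}$ is handled by showing $EH_X(x)=\limsup_{x'\in X_{\reg},\,x'\to x} EH_X(x')$ and then invoking Demailly's extension theorem \cite[Th\'eor\`eme~1.7]{Dem} rather than an unspecified ``standard descent lemma''.
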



The proof of this result given in \cite{DF2} is no more difficult 
than Rosay's proofs in \cite{Rosay1,Rosay2} for the case when $X$ is a complex manifold;
it combines Poletsky's proof on $X=\C^n$ with the method 
of gluing holomorphic sprays of discs. (For an exposition of the latter method 
we refer to \cite[\S 5.8--\S 5.9]{Fbook}.)
We use this opportunity to give another proof of Theorem \ref{Poletsky-Rosay}, 
reducing it to the case when $X$ is a complex manifold
by applying Hironaka desingularization theorem \cite{Hironaka,AHV,BM}.
The latter states that for every paracompact reduced complex space, $X$,
there is a proper holomorphic surjection $\pi\colon M\to X$ satisfying the
following properties: 
\begin{itemize}
\item $M$ is a complex manifold, 
\item $\pi\colon M\setminus \pi^{-1}(X_{\rm sing})\to X\setminus X_{\rm sing}$ 
is a biholomorphism, and 
\item $\pi^{-1}(X_{\rm sing})$ is a complex hypersurface in $M$.
\end{itemize}


We now prove the following more general result on envelopes of disc
functionals, showing that the only problem with plurisubharmonicity 
is at points where the complex space is locally reducible.

\begin{thm}
\label{desig_thm}
Let $X$ be a complex space and
let $\pi\colon M\to X$ be a desingularization of $X$.
Given a disc functional $H_X\colon \cO(\clD,X) \to\ol\R$,
we define a disc functional $H_M=\pi^* H_X\colon \cO(\clD,M) \to\ol\R$ by
\begin{align}
	H_M(g)=H_X(\pi \circ g) \hbox{ for each }g\in \cO(\clD,M).
  \label{eq_assthm}
\end{align}
If the envelope $EH_M$ is plurisubharmonic on $M$, then the envelope 
$EH_X$ given by (\ref{eq:envelope}) is plurisubharmonic on the regular 
part $X_{\rm reg}$ of $X$, and we have that
\begin{align}
	EH_X(x)=\inf\{EH_M(p)\colon p\in\pi^{-1}(x)\} \hbox{ for each }x\in X_{\rm sing}.
  \label{eq_bssthm}
\end{align}
If $X$ is locally irreducible at a point $x\in X_{\sing}$, 
then $EH_X$ is plurisubharmonic in a neighborhood of $x$ in $X$.
\end{thm}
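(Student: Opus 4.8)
The plan is to derive the statement from the first part of Theorem \ref{desig_thm}, which we may assume: $EH_X$ is plurisubharmonic on $X_{\reg}$ and obeys the fibre formula (\ref{eq_bssthm}) on $X_{\sing}$; we also use that $EH_M=EH_X\circ\pi$ on $\pi^{-1}(X_{\reg})$, the easy half of that part (there $\pi$ is biholomorphic, and every disc in $X$ not contained in $X_{\sing}$ lifts to $M$). Fix $x\in X_{\sing}$ at which $X$ is locally irreducible. As a first step I would check that $EH_X$ is locally bounded above near $x$: if $K\ni x$ is a compact neighbourhood, then $\pi^{-1}(K)$ is compact and $EH_M\le C$ there for some constant $C$, whence $EH_X(y)=EH_M(\pi^{-1}(y))\le C$ for $y\in K\cap X_{\reg}$, while $EH_X(y)=\inf_{\pi^{-1}(y)}EH_M\le C$ for $y\in K\cap X_{\sing}$ by (\ref{eq_bssthm}).

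Next, let $\wt h$ denote the upper semicontinuous regularization of $EH_X$ along $X_{\reg}$, i.e. $\wt h=EH_X$ on $X_{\reg}$ and $\wt h(y)=\limsup_{X_{\reg}\ni y'\to y}EH_X(y')$ for $y\in X_{\sing}$. Since $X$ is locally irreducible at $x$, $EH_X$ is plurisubharmonic on $X_{\reg}$, and $EH_X$ is locally bounded above near $x$, the known extension theorem for plurisubharmonic functions across $X_{\sing}$ on locally irreducible complex spaces shows that $\wt h$ is plurisubharmonic on a neighbourhood $V$ of $x$. The whole statement then reduces to the identity $\wt h=EH_X$ on $V$.

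To prove this identity I would pass to $M$. The pullback $\wt h\circ\pi$ is plurisubharmonic on $\pi^{-1}(V)$, and on $\pi^{-1}(V)\setminus\pi^{-1}(X_{\sing})$ it equals $EH_X\circ\pi=EH_M$. As $\pi^{-1}(X_{\sing})$ is a complex hypersurface in the manifold $M$, it has Lebesgue measure zero, and two plurisubharmonic functions agreeing off a null set agree everywhere; hence $\wt h\circ\pi=EH_M$ throughout $\pi^{-1}(V)$. Being a pullback, $\wt h\circ\pi$ is constant on each fibre $\pi^{-1}(y)$, $y\in V$, so $EH_M$ equals the constant $\wt h(y)$ on $\pi^{-1}(y)$ (in agreement with the fact that this fibre is compact and connected and $EH_M$ is plurisubharmonic). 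Plugging this into (\ref{eq_bssthm}) gives $EH_X(y)=\inf_{\pi^{-1}(y)}EH_M=\wt h(y)$ for $y\in V\cap X_{\sing}$, and $EH_X=\wt h$ on $V\cap X_{\reg}$ by definition; thus $EH_X=\wt h$ is plurisubharmonic on $V$.

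I expect the identity $\wt h=EH_X$ near $x$ to be the crux. A priori the fibrewise infimum in (\ref{eq_bssthm}) could drop strictly below the upper semicontinuous regularization, and it is precisely local irreducibility — entering through the extension theorem and through the connectedness of the resolution fibre over a locally irreducible point — that excludes this. The same phenomenon is what makes the hypothesis indispensable, in accordance with the locally reducible counterexample announced in the introduction.
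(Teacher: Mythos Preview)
You treat only the final assertion---plurisubharmonicity near a locally irreducible singular point---and take the earlier conclusions (plurisubharmonicity on $X_{\reg}$ and the fibre formula (\ref{eq_bssthm})) as given. The paper does prove those, via the disc-lifting Lemma~\ref{lifting}: the equality $EH_M=EH_X\circ\pi$ on $\pi^{-1}(X_{\reg})$ follows from the bijection between discs that it provides, and (\ref{eq_bssthm}) is obtained by lifting discs in $X$ to $M$ for one inequality and, for the other, slightly perturbing a near-optimal disc in $M$ off the hypersurface $\pi^{-1}(X_{\sing})$ before projecting. You sketch the first of these but do not address the second; neither is hard, but both are part of the statement.

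For the last assertion your argument is correct and is essentially the paper's, reorganised. The paper first observes that $EH_M(p)=\limsup_{q\to p,\,q\in\pi^{-1}(X_{\reg})}EH_M(q)$ (since $EH_M$ is plurisubharmonic and $\pi^{-1}(X_{\sing})$ is a hypersurface), then uses that local irreducibility at $x$ makes the compact fibre $\pi^{-1}(x)$ connected, so $EH_M$ is constant there, deduces $EH_X(x)=\limsup_{x'\to x,\,x'\in X_{\reg}}EH_X(x')$ via (\ref{eq_bssthm}), and finally applies Demailly's extension theorem. You apply Demailly first to produce $\wt h$, then identify $\wt h\circ\pi$ with $EH_M$ using the principle that two plurisubharmonic functions agreeing off a pluripolar hypersurface coincide---this is precisely the paper's $\limsup$ step in disguise---and read off $EH_X=\wt h$ from (\ref{eq_bssthm}). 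The ingredients (Demailly's theorem, connectedness of the resolution fibre, determination of a plurisubharmonic function by its values off a thin set) are identical; your ordering has the mild advantage of treating all singular points in the neighbourhood $V$ simultaneously rather than just $x$.
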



We shall need the following lemma
on lifting holomorphic discs to a desingularization.
For the sake of completeness we include the proof.

\begin{lem}
\label{lifting}
Let $X$ be a complex space and
let $\pi\colon M\to X$ be a desingularization of $X$.
Given a holomorphic disc $f\in \cO(\clD,X)$ such that $f(\D)\not\subset X_{\sing}$
there exists a unique holomorphic disc 
$g\in \cO(\clD,M)$ such that $\pi\circ g=f$.
\label{lift_disc}
\end{lem}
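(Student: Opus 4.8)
\emph{Plan.} The strategy is to lift $f$ first over the open set where $\pi$ is biholomorphic, and then to extend the resulting lift holomorphically across the remaining (finitely many) points by a graph–closure argument that exploits the properness of $\pi$.

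Shrinking $U_f$ to the connected component containing $\clD$, I may assume $U_f$ is connected. Then $A:=f^{-1}(X_{\sing})$ is a proper analytic subset of $U_f$ — proper because $\D$ is a nonempty open subset of the connected Riemann surface $U_f$ while $f(\D)\not\subset X_{\sing}$ — hence $A$ is discrete and closed in $U_f$, so after a further shrinking of $U_f$ to a small neighborhood of $\clD$ we have $A=A\cap\clD=\{a_1,\dots,a_k\}$, a finite set. Over $U_f\setminus A$ the disc $f$ avoids $X_{\sing}$, so, writing $H:=\pi^{-1}(X_{\sing})$, I set $g_0:=(\pi|_{M\setminus H})^{-1}\circ f\colon U_f\setminus A\to M\setminus H$; this is the unique continuous, hence holomorphic, lift of $f|_{U_f\setminus A}$, since $\pi$ is injective on $M\setminus H$. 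Thus the whole problem reduces to extending $g_0$ holomorphically across each $a_j$. Uniqueness of the eventual lift is then immediate: any holomorphic $g'$ with $\pi\circ g'=f$ maps $U_f\setminus A$ into $M\setminus H$ (there $\pi\circ g'$ avoids $X_{\sing}$) and so coincides with $g_0$ on the dense set $U_f\setminus A$, hence with its extension everywhere by continuity.

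Fix $a=a_j$ and a closed disc $\ol{D}_a\ss U_f$ with $D_a\cap A=\{a\}$. Since $\pi$ is proper, $K:=\pi^{-1}\big(f(\ol{D}_a)\big)$ is compact in $M$ and contains $g_0(\ol{D}_a\setminus\{a\})$; hence the graph $\Gamma:=\{(\zeta,g_0(\zeta))\colon\zeta\in D_a\setminus\{a\}\}\subset D_a\times M$ has closure $\ol\Gamma\subset D_a\times K$, so $\mathrm{pr}_1\colon\ol\Gamma\to D_a$ is proper. To see that $\ol\Gamma$ is analytic, compare $\Gamma$ with the analytic subset $\mathcal Z\subset D_a\times M$ defined as the preimage of the (analytic) diagonal of $X\times X$ under the holomorphic map $(\zeta,p)\mapsto(f(\zeta),\pi(p))$: one checks that $\Gamma=\mathcal Z\cap\big((D_a\setminus\{a\})\times M\big)$, so $\Gamma$ is open in $\mathcal Z$; being connected and one–dimensional it lies in, and is dense in, a single irreducible component $Y$ of $\mathcal Z$, whence $\ol\Gamma=Y$ is analytic. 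Now $Y$ is an irreducible one–dimensional analytic subset of $D_a\times M$, and $\mathrm{pr}_1\colon Y\to D_a$ is proper, surjective, and a bijection over $D_a\setminus\{a\}$; its fibre over $a$ is a compact analytic subset of the irreducible curve $Y$, hence finite, so $\mathrm{pr}_1|_Y$ is a finite bimeromorphic morphism onto the smooth disc $D_a$, and therefore an isomorphism. Consequently $Y$ is the graph of a holomorphic map $g\colon D_a\to M$ which agrees with $g_0$ on $D_a\setminus\{a\}$ and satisfies $\pi\circ g=f$. Performing this at each $a_j$ and keeping $g_0$ elsewhere yields the desired $g\in\cO(\clD,M)$ with $\pi\circ g=f$.

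The substantive point is the local extension across the points of $A$; everything else is bookkeeping. The decisive inputs are the properness of $\pi$, which confines $\ol\Gamma$ to a set proper over $D_a$, and the fact that $\pi$ is biholomorphic off $X_{\sing}$, which forces the limiting curve $Y$ to have degree one over $D_a$ and hence to be a graph again. Note that the hypersurface property of $H$ plays no role in this lemma.
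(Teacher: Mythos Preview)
Your proof is correct and follows the same overall strategy as the paper: lift $f$ via $\pi^{-1}$ over the complement of the discrete set $A=f^{-1}(X_{\sing})$, then extend holomorphically across each point of $A$ by an analytic--closure argument using properness of $\pi$. The implementation differs slightly: the paper works on the \emph{image} side, taking the proper transform $\overline{g(U\setminus\{s\})}\subset M$ of the germ of the curve $C=f(U)\subset X$ and arguing that it meets the fibre $\pi^{-1}(f(s))$ in a single point; you instead work with the \emph{graph} $\Gamma\subset D_a\times M$, identify its closure with an irreducible component of the fibre product $\cZ=\{(\zeta,p)\colon f(\zeta)=\pi(p)\}$, and invoke normality of $D_a$ to conclude that the degree--one finite projection $Y\to D_a$ is an isomorphism. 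Your graph approach is marginally more robust (it sidesteps the need to realize $f(U)$ as an irreducible curve germ and avoids any concern about non--injectivity of $f$ near $s$), while the paper's version is a bit more geometric and concludes by the elementary Riemann removable singularities theorem rather than by the finite--bimeromorphic--onto--normal argument. Both are standard and essentially equivalent.
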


\begin{proof}
Fix $f\in \cO(\clD,X)$ such that $f(\D)\not\subset X_{\sing}$.
Let $U_f$ be an open connected neighborhood of $\clD$ on which  
$f$ is holomorphic. Then $S=f^{-1}(X_{\sing})$ is a discrete
subset of $U_f$. Since the map $\pi\colon M\to X$ is 
biholomorphic on $M\setminus\pi^{-1}(X_{\sing})$,
there is a unique holomorphic map $g = \pi^{-1}\circ f \colon U_f\setminus S\to M$ 
such that $\pi\circ g=f$ on $U_f\setminus S$.
We need to show that $g$ extends holomorphically across $S$.
Pick a point $s\in S$ and a connected open neighborhood 
$U \subset U_f$ of $s$ such that $U\cap S=\{s\}$.
By shrinking $U$ around $s$ we may assume that the image 
$C=f(U)\subset X$ is an irreducible closed complex curve in an open 
neighborhood $V\subset X$ of the image point $f(s) = x \in X_{\sing}$.   
Its preimage $\pi^{-1}(C)$ is then a closed complex subvariety 
of the open set $\pi^{-1}(V)\subset M$. 
Observe that $\pi^{-1}(C) \setminus \pi^{-1}(x)=g(U\setminus\{s\})$.
Since the closure of the difference of two subvarieties
is again a subvariety (see e.g.\ \cite[Corollary, p.\ 53]{Chi}),
we infer that $\Sigma := \overline{g(U\setminus\{s\})}$ is a pure 
one dimensional complex subvariety of $\pi^{-1}(V)$ projecting onto $C$. 
(This is the proper transform of $C$ in $M$.) 
Since $g(U\setminus\{s\})$ is connected,  
the set $\Sigma \cap\pi^{-1}(x)$ consists of precisely one point, 
say $p$, and setting $g(s)=p$ extends the map $g$ 
holomorphically to the point $s$.
\end{proof}

\begin{proof}[Proof of Theorem \ref{desig_thm}]
\label{proof-by-desing}
Assume that the envelope $EH_M$ is plurisubharmonic. 
Since the map $\pi$ is biholomorphic over $X_{\rm reg}$, 
the function $EH_M|\pi^{-1}(X_{\rm reg})$ 
passes down to a plurisubharmonic function on $X_{\rm reg}$: 
\begin{equation}
\label{eq_EH_Mw}
			EH_M = w\circ \pi 	
\end{equation} 
for some function $w\colon X_{\rm reg}\to \R\cup\{-\infty\}$.
To see that $w=EH_X$ on $X_{\rm reg}$, choose a point $x\in X_{\reg}$ 
and let $p\in M$ be the unique point with $\pi(p)=x$. 
Every analytic disc $f\in \cO(\clD,X)$ with $f(0)=x$ 
satisfies $f(\D)\not \subset X_{\sing}$, and by Lemma \ref{lift_disc} it 
lifts to a disc $g\in \cO(\clD,M)$
centered at $g(0)=p$ so that $\pi\circ g=f$.
In particular, every disc $g\in \cO(\clD,M)$ with $g(0)=p\in \pi^{-1}(X_\reg)$
is the unique lifting of its projection $f=\pi\circ g$.
By taking the infimum over all discs $f\in \cO(\clD,X,x)$, 
the above implies in view of (\ref{eq_assthm}) and (\ref{eq_EH_Mw})
that $w(x)=EH_M (p)= EH_X(x)$. This shows that $w=EH_X$ on $X_{\reg}$.

Consider now a point $x \in X_{\sing}$. 
Since any disc $f\in\cO(\clD,X,x)$ such that $f(\D)\not\subset X_{\sing}$
lifts to a disc $g\in \cO(\clD,M)$ centered at some point $p\in\pi^{-1}(x)$, 
we get using (\ref{eq_assthm}) and (\ref{eq_EH_Mw}) that
$EH_M(p)\le H_M(g)=H_X(f)$.
By taking the infimum over all $f\in \cO(\clD,X,x)$ with 
$f(\D)\not\subset X_{\sing}$ we infer that 
\[
	\alpha:= \inf\{EH_M(p)\colon p\in\pi^{-1}(x)\}\le EH_X(x).
\]
To get the converse inequality,
pick $\epsilon>0$ and choose a point $p\in\pi^{-1}(x) \in M$
such that $EH_M(p)<\alpha +\epsilon$.
There is disc $g\in \cO(\clD,M,p)$ satisfying
\[
	H_M(g) <  EH_M(p)+\epsilon<\alpha+2\epsilon.
\]
By moving $g$ slightly, keeping its center fixed, we may 
assume that $g(\D)$ is not contained in the hypersurface $\pi^{-1}(X_{\sing})$.
Then the image of the disc 
$f=\pi \circ g \in \cO(\clD,X,x)$ is not contained in $X_{\sing}$. 
The above inequality together with (\ref{eq_assthm}) and (\ref{eq_EH_Mw}) implies 
\[
   H_X(f)= H_M(g) < EH_M(p)+\epsilon<\alpha+2\epsilon,
\]
and therefore $EH_X(x) <\alpha+2\epsilon$. Since $\epsilon>0$
was arbitrary, we have $EH_X(x)\le \alpha=\inf\{EH_M(p)\colon p\in\pi^{-1}(x)\}$
which proves (\ref{eq_bssthm}).

Assume now that $X$ is locally irreducible at a point $x \in X_{\sing}$.
Pick a point $p\in M$ with $\pi(p)=x$. 
Since $EH_M$ is upper semicontinuous and $\pi^{-1}(X_{\sing})$ 
is a hypersurface in $M$, we have
\[
	EH_M(p)=\limsup_{q\in \pi^{-1}(X_{\reg}),\ q\to p} EH_M(q).
\]
Local irreducibility of $X$ at the point $x$ implies that the fiber 
$\pi^{-1}(x)$ is a connected compact analytic set,
and therefore any plurisubharmonic function on $M$ 
is constant on $\pi^{-1}(x)$. This implies in view 
of (\ref{eq_bssthm}) that 
\[
	EH_X(x)=\limsup_{x'\in X_{\reg},\ x'\to x} EH_X(x').
\]
A theorem of Demailly \cite[Th\'eor\`eme 1.7]{Dem} now shows that the 
function $EH_X$, being plurisubharmonic on $X_{\reg}$, is 
also plurisubharmonic in a neighborhood of $x$ in $X$.
\end{proof}

Theorem \ref{thm_functionals} now follows from 
Theorem \ref{desig_thm} and the following lemma.

\begin{lem}
\label{desig_lemma}
Let $M$ and $X$ be complex spaces and $\pi\colon M\to X$ a holomorphic map.
Let $H_X\colon\cO(\clD,X) \to\ol\R$ be one of the following disc functionals:
\begin{itemize}
\item[\rm (i)]   $P_u$, the Poisson functional corresponding to an 
upper semicontinuous function $u$ on $X$,
\item[\rm (ii)]  $P_v$, the Poisson functional corresponding to a plurisuperharmonic function $v$ on $X$,
\item[\rm (iii)] $\wt L_{\alpha}$, the reduced Lelong functional associated to 
a nonnegative function $\alpha$ on $X$.
\end{itemize}
The disc functional $H_M\colon\cO(\clD,M) \to\ol\R$ defined by
\begin{align}
	H_M(g)=H_X(\pi \circ g),\qquad g\in \cO(\clD,M),
\end{align}
is then of the same kind (i)-(iii), and, if $M$ is nonsingular, 
then $EH_M$ is plurisubharmonic on $M$.
\end{lem}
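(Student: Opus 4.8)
The plan is to first verify the "same kind" assertion, which for each functional reduces to a direct substitution, and then to invoke the already-established manifold results for plurisubharmonicity. For case (i), given an upper semicontinuous $u$ on $X$, the composition $u\circ\pi$ is upper semicontinuous on $M$ (since $\pi$ is continuous), and for any $g\in\cO(\clD,M)$ one has $H_M(g)=P_u(\pi\circ g)=\frac{1}{2\pi}\int_0^{2\pi}u(\pi(g(\E^{\I t})))\,\d t=P_{u\circ\pi}(g)$; thus $H_M=P_{u\circ\pi}$ is again a Poisson functional. Case (ii) is the same computation, noting only that $v\circ\pi$ is plurisuperharmonic on $M$ when $v$ is plurisuperharmonic on $X$, because plurisuperharmonicity is preserved under composition with holomorphic maps (equivalently, $-v$ plurisubharmonic pulls back to $-v\circ\pi$ plurisubharmonic). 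For case (iii), set $\beta=\alpha\circ\pi\ge 0$ on $M$; for $g\in\cO(\clD,M)$ the reduced Lelong functional is $\wt L_\alpha(\pi\circ g)=\sum_{z\in\D}\alpha(\pi(g(z)))\log|z|$ where the sum runs over the (discrete) preimage $(\pi\circ g)^{-1}(\{\alpha>0\})\cap\D$. Here one must be slightly careful: $\alpha(\pi(g(z)))$ is nonzero exactly when $g(z)\in\pi^{-1}(\{\alpha>0\})=\{\beta>0\}$, so $\wt L_\alpha(\pi\circ g)=\sum_{z\in\D}\beta(g(z))\log|z|=\wt L_\beta(g)$, and hence $H_M=\wt L_\beta$ is a reduced Lelong functional on $M$.

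For the second assertion, assume $M$ is nonsingular. Then $EH_M$ is the envelope over $\cO(\clD,M,p)$ of a functional of type (i), (ii), or (iii) on a complex manifold, and plurisubharmonicity of such envelopes on complex manifolds is exactly what was proved by Rosay \cite{Rosay1,Rosay2} (and Edigarian \cite{Edigarian2003-2}) for the Poisson functional in case (i), by Edigarian \cite{Edigarian2003} and L\'arusson--Sigurdsson \cite{Larusson-Sigurdsson1998,Larusson-Sigurdsson2003} for case (iii), and, for case (ii), follows from the Poisson case applied to $-v$ together with the sign change (or directly, since a plurisuperharmonic $v$ is in particular upper semicontinuous when $v$ is real-valued, but one should rather use that $P_{v}=-P_{-v}$ and $-v$ is plurisubharmonic, hence upper semicontinuous, so the case (i) result applies to $-v$ and gives plurisubharmonicity of $EP_{-v}$; for $EP_v$ one notes that when $v$ is plurisuperharmonic $EP_v=v$ on a manifold by the sub-mean-value property, which is plurisuperharmonic—wait, that is the wrong sign, so in fact case (ii) is the statement that $EP_v$, which equals $v$, need not be needed here; the relevant point is that in the application to Theorem \ref{thm_functionals}(ii) via \eqref{eq_Riesz_Poisson} one takes $u$ plurisubharmonic and uses $EP_{-u}$, i.e. $v=-u$ plurisuperharmonic, and it is precisely $EP_{-u}$ that must be shown plurisubharmonic, which is case (i) applied to the upper semicontinuous function $-u$).

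I expect the only genuine subtlety to be bookkeeping in case (iii): making sure the multiplicity-free sum over $z\in\D$ is correctly pulled back, i.e. that for $g\in\cO(\clD,M)$ the support of $z\mapsto\beta(g(z))$ inside $\D$ is discrete and that the identity $\wt L_\alpha(\pi\circ g)=\wt L_\beta(g)$ holds term by term (this uses nothing about $\pi$ beyond holomorphy and that $\beta=\alpha\circ\pi$). The plurisubharmonicity conclusion itself is not reproved here; it is quoted from the manifold literature. No local irreducibility or desingularization hypothesis on $\pi$ is used, and indeed none is needed for this lemma: the point is simply that these three classes of functionals pull back to themselves under arbitrary holomorphic maps, so that Theorem \ref{desig_thm} applies with $M$ a resolution and yields Theorem \ref{thm_functionals}.
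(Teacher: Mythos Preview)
Your overall approach matches the paper's: verify by direct substitution that each functional pulls back to one of the same type under $\pi$, then quote the known manifold results for plurisubharmonicity of the envelope. Cases (i) and (iii) are handled correctly and essentially verbatim as in the paper; your extra worry about discreteness in (iii) is unnecessary, since the identity $\wt L_\alpha(\pi\circ g)=\wt L_{\alpha\circ\pi}(g)$ is a term-by-term tautology regardless of whether the sum converges.

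Your treatment of case (ii), however, contains a genuine error. After a long digression you conclude by reducing to ``case (i) applied to the upper semicontinuous function $-u$,'' but if $u$ is plurisubharmonic then $-u$ is \emph{lower} semicontinuous, not upper semicontinuous, so the Poletsky--Rosay theorem in its standard form does not apply to $-u$. (The earlier aside that ``a plurisuperharmonic $v$ is in particular upper semicontinuous when $v$ is real-valued'' is likewise false, and the claim $EP_v=v$ is wrong too: for a strictly plurisuperharmonic $v$ one has $EP_v<v$ somewhere.) The paper sidesteps all of this by simply citing the literature: plurisubharmonicity of $EP_{-u}$ for $u\in\Psh(M)$ on a complex manifold is exactly what is covered by the Riesz-functional results of L\'arusson--Sigurdsson \cite{Larusson-Sigurdsson1998,Larusson-Sigurdsson2003} and Edigarian \cite{Edigarian2003}, via the identity $ER_u=u+EP_{-u}$. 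So the correct fix for (ii) is not to reduce it to (i), but to invoke those Riesz results directly, as the paper does in one line.
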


\begin{proof}
If $H_X=P_u$ is the Poisson functional corresponding to an upper semicontinuous function $u$ on $X$,
then $H_M$ is the Poisson functional corresponding to the upper semicontinuous function $u\circ\pi$ on $M$, since
\[
	P_{u\circ\pi}(g)=\frac{1}{2\pi} \int^{2\pi}_0 u(\pi(g(\E^{\I t})))\, \d t
	=P_u(\pi \circ g) 
\]
for each $g\in \cO(\clD,M)$. A similar argument applies in case (ii).
%
If $H_X=\wt L_{\alpha}$ is the reduced Lelong functional associated to 
a nonnegative function $\alpha$ on $X$, then $H_M$ is the 
reduced Lelong functional corresponding to the nonnegative 
function $\alpha\circ\pi$ on $M$:
\[
	\wt L_{\alpha\circ\pi}(g)=\sum_{\zeta\in \D} \alpha(\pi(g(\zeta)))\, \log|\zeta|
	= \wt L_\alpha(\pi\circ g),\qquad g\in \cO(\clD,M).
\]

By the results cited above, the envelopes of all these disc functionals 
are pluri\-sub\-harmonic on $M$ if $M$ is smooth.
\end{proof}

\begin{rem}
 In this paper we defined the envelope of a disc functional at a point $x\in X$ 
 as the infimum over all discs centered at $x$ and not entirely
 lying in the singular locus $X_{\sing}$, whereas in \cite{DF2} we 
 considered the infimum with respect to {\em all discs} centered at $x$.
 The two envelopes clearly coincide on $X_{\reg}$, and if they are 
 plurisubharmonic then they coincide on all of $X$.
\end{rem}

%
%
%
%
\begin{exa}
\label{contr_examples}
We show that the conclusion of Theorem \ref{thm_functionals} 
fails in general at a point where $X$ is not locally irreducible. 
Such examples can already be found in the simplest case when $X$
is a complex curve with a single double point. 
To be explicit, consider the map $f\colon \C\to \C^2$ given by $f(z)=(z^3,z^2+z)$.
It is easily seen that $f$ is a proper holomorphic immersion
whose only double point is 
$p=f(\omega_1)=f(\omega_2)$, where $\omega_1\ne \omega_2$ 
are the two nonreal solutions of the equation $z^3=1$.
The immersed complex curve $X=f(3\D) \subset\C^2$ 
has exactly one self-intersection (at the point $p$), 
and $f\colon 3\D \to X$ is a desingularization of $X$.



Choose a smooth convex function $v\le 0$ on the disc $3\ol\D$ such that 
$v=0$ on $b(3\D)$ and $v(\omega_1)\ne v(\omega_2)$, 
and a linear function $A\colon\C\to \R$ such that
$v+A \le 0$ on $3\D$ and $(v+A)(\omega_1)= (v+A)(\omega_2)$.
It is easily seen that $EP_{-(v+A)}=-A$ on $3\D$.
The subharmonic function $v+A$ passes down to a  
smooth subharmonic function $v_X\le 0$ on $X$, and we get by
(\ref{eq_bssthm}) that
\[
	EP_{-v_X}(p) = \min \{EP_{-(v+A)}(\omega_1), EP_{-(v+A)}(\omega_2)\}
  =\min \{-A(\omega_1), -A(\omega_2)\}.
\]
Since these two values are different, we see that the Poisson envelope 
$EP_{-v_X}$ is not upper semicontinuous at the point $p$.
As $ER_{v_X}=v_X+EP_{-v_X}$ and $v_X$ is continuous on $X$, 
we also see that the envelope of the Riesz functional,
$ER_{v_X}$, is not upper semicontinuous at $p$.

For the Lelong functional, choose a function 
$\alpha\ge 0$ on $3\D$ with $\alpha(\omega_1)=\alpha(\omega_2)=0$ 
such that $EL_\alpha(\omega_1)\ne EL_\alpha(\omega_2)$. 
Then $\alpha$ passes down to a function $\alpha_X\ge 0$ on $X$ and 
$EL_{\alpha_X}(p)=\min\{EL_{\alpha}(\omega_1), EL_{\alpha}(\omega_2)\}$, 
so $EL_{\alpha_X}$ is not upper semicontinuous at $p$.
\end{exa}


\section{Envelopes of Lelong functionals}
\label{sec:Lelong}
In this section we give a new and simpler treatment of Lelong functionals
$(\ref{eq:Lelong})$ that have been considered 
earlier by Poletsky \cite{Poletsky1993,Poletsky2002}
any by L\'arusson and Sigurdsson \cite{Larusson-Sigurdsson1998,Larusson-Sigurdsson2003}. 
For simplicity we consider the case when  $X$ is a complex manifold, 
although the methods and results also apply on the regular locus 
of any complex space. (In the latter case, with $X$ locally irreducible
at each point, the proof of plurisubharmonicity of 
Lelong envelopes can be concluded as in Theorem \ref{desig_thm}.)

Given a plurisubharmonic function $u \in \Psh(X)$, 
we denote by $\nu_u(x) \in [0,+\infty]$ its Lelong number
at a point $x\in X$. Recall that in any local coordinate system 
$z$ on $X$, with $z(x)=0$, we have
\[
	\nu_u(x)= \lim_{r\to 0} \frac{\sup_{|z|\le r} u(z)}{\log r}.
\]
(We consider the function $u=-\infty$ as plurisubharmonic
and set $\nu_{-\infty}=+\infty$. Lelong numbers can also be defined 
for functions on complex spaces.)

Given a nonnegative function $\alpha\colon X\to \R_+$,  let
\[
		\cF_\alpha=\{u \in \Psh(X)\colon u\le 0,\ \nu_u \ge\alpha\}.
\]
The goal is to identify the corresponding maximal function 
\begin{equation}
\label{eq:maxFalpha}
	v_\alpha=\sup\{u\colon u\in \cF_\alpha\}
\end{equation}
as the envelope of certain disc functionals which arise
naturally from the following considerations. 
(Compare with \cite[\S 5]{Larusson-Sigurdsson1998}.)

If $u\in \cF_\alpha$ and $f\in \cA_X$, then clearly
$u\circ f\le 0$ is a subharmonic function on the disc $\D$ whose 
Lelong number at any point $z\in \D$ satisfies 
\[
	\nu_{u\circ f}(z) \ge \alpha(f(z))\, m_f(z),
\]
where $m_f(z)$ denotes the multiplicity of $f$ at the point $z$.
Hence $u\circ f$ is bounded above by the largest subharmonic function 
$v=v_{\alpha,f}\le 0$ on $\D$ satisfying $\nu_v\ge (\alpha\circ f)m_f$.
This maximal function $v$ is the weighted sum of 
Green functions with coefficients $(\alpha\circ f)m_f$: 
\[
	v(\zeta) = \sum_{z\in \D} \alpha(f(z)) \, m_f(z) 
		\log\left| \frac{\zeta-z}{1-\bar z\zeta}\right|,
		\quad \zeta\in\D.
\]
(If the sum is divergent then $v\equiv-\infty$.)
Indeed, the difference between $v$ and the right hand side above
is subharmonic on $\ol\D$, except perhaps at the points $z$ where 
$\alpha(f(z))>0$; near these points it is bounded above,
so it extends to a subharmonic function on $\D$.
Since it is clearly $\le 0$ on $b\D$, the maximum principle implies that
it is $\le 0$ on all of $\D$ which proves the claim.

Setting $\zeta=0$, we see that for every $u\in \cF_\alpha$ and 
$f\in \cA_X$ we have 
\[
	u(f(0)) \le \sum_{z\in \D} \alpha(f(z)) m_f(z) \log|z| 
	\le  \sum_{z\in \D} \alpha(f(z)) \log|z| 
	\le \inf_{z\in \D} \alpha(f(z)) \log|z|.
\]
(The second and the third inequality are trivial.)
These expressions determine the following disc functionals on $X$
with values in $[-\infty,0]$:
\begin{eqnarray*}
	L_\alpha(f) &=& \sum_{z\in \D} \alpha(f(z))\, m_f(z) \log|z|, \cr
	\wt L_\alpha(f) &=& \sum_{z\in \D} \alpha(f(z))\, \log|z|, 
	\label{eq:Lelongred} \cr
	K_\alpha(f) &=& \inf_{z\in \D} \alpha(f(z)) \log|z|.
	\label{eq:K} 
\end{eqnarray*}
The first two are the {\em Lelong functional}, $L_\alpha$,
and the {\em reduced Lelong functional}, $\wt L_\alpha$,
that have already been mentioned in \S\ref{sec:Poisson}.
By taking infima over all analytic discs $f$ in $X$ with a fixed
center $f(0)=x\in X$ we obtain the corresponding inequalities
for their envelopes:
\begin{equation}
\label{eq:ineq-envelopes}
	u\le EL_\alpha \le E\wt L_\alpha \le EK_\alpha =: k_\alpha,
	\quad \forall u\in \cF_\alpha. 
\end{equation}
The function $k_\alpha=EK_\alpha \colon X\to [-\infty,0]$,
which is denoted $k^\alpha_X$ in \cite[p.\ 21]{Larusson-Sigurdsson1998},
is related to a certain function studied by Edigarian \cite{Edigarian1997}.
It is easily seen that $k_\alpha$ is upper semicontinuous 
(it suffices to move the center $f(0)$ of the test disc, 
while at the same time fixing the value $f(z)\in X$ at a point $z\in \D$ 
where  $\alpha(f(z)) \log |z|$ is close to optimal), and that 
its Lelong numbers are bounded below by $\alpha$ 
\cite[Proposition 5.2]{Larusson-Sigurdsson1998}.
Hence the Poisson envelope $EP_{k_\alpha}$
(which is the largest plurisubharmonic minorant of $k_\alpha$
according to Theorem \ref{Poletsky-Rosay})
also has Lelong numbers bounded below by $\alpha$, and so it
belongs to the class $\cF_\alpha$. Since $u\le k_\alpha$ 
for every $u\in\cF_\alpha$ by (\ref{eq:ineq-envelopes}), 
it follows that $EP_{k_\alpha}=v_\alpha$ is the maximal
function (\ref{eq:maxFalpha}). Furthermore, and this is the only
nontrivial thing that remains to be seen, the envelopes 
$EL_\alpha$ and $E\wt L_\alpha$ also equal the maximal 
function $v_\alpha$. 

Summarizing the above discussion, we have the following result.

\begin{thm} 
\label{thm:Lelong}
For every function $\alpha\ge 0$ on a complex manifold $X$
the maximal function $v_\alpha$ (\ref{eq:maxFalpha}) is 
plurisubharmonic and equals the envelope of both the
Lelong and the generalized Lelong functionals:
\begin{equation}
\label{eq:equalities}
		v_\alpha= EL_\alpha = E\wt L_\alpha = EP_{k_\alpha}.  
\end{equation}
If $X$ is a complex space then these equalities hold
on the regular locus $X_{\reg}$; if $X$ is locally irreducible 
at every point, then they hold on all of $X$.
\end{thm}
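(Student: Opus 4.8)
The chain of inequalities \eqref{eq:ineq-envelopes} together with the discussion preceding the theorem already yields $v_\alpha = EP_{k_\alpha} \le E\wt L_\alpha \le EL_\alpha$, and $EL_\alpha \le k_\alpha$, so once we know $EP_{k_\alpha}=v_\alpha$ (which follows from Theorem \ref{Poletsky-Rosay} as explained: $EP_{k_\alpha}$ is plurisubharmonic, $\le 0$, has Lelong numbers $\ge\alpha$, hence lies in $\cF_\alpha$, hence equals its supremum since every member of $\cF_\alpha$ is $\le k_\alpha$), the only thing left to prove is the reverse inequality $EL_\alpha \le v_\alpha$, i.e.\ that for every $x\in X$ and every $\epsilon>0$ there is a disc $f\in\cA_X$ (or in $\cO(\clD,X)$) centered at $x$ with $L_\alpha(f) < v_\alpha(x)+\epsilon$. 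Equivalently, since $v_\alpha=EP_{k_\alpha}$ is a Poisson envelope, it suffices to show that $EL_\alpha \le EP_{k_\alpha}$: given a Poisson test disc $h\in\cO(\clD,X,x)$, one must manufacture a Lelong test disc $f$ centered at $x$ with $L_\alpha(f) \lesssim P_{k_\alpha}(h)$.

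\textbf{Key steps.} First I would fix $x\in X$, a disc $h\in\cO(\clD,X,x)$, and $\delta>0$, aiming to build $f$ with $L_\alpha(f) < P_{k_\alpha}(h)+\delta$. Since $k_\alpha=EK_\alpha$, for each boundary point $\zeta=\E^{\I t}$ the value $k_\alpha(h(\zeta))$ is approximated from above by $K_\alpha(g_\zeta)$ for some small disc $g_\zeta$ centered at $h(\zeta)$; and $K_\alpha(g_\zeta)=\inf_{z\in\D}\alpha(g_\zeta(z))\log|z|$ is, in turn, controlled by the single-point Lelong functional $\alpha(g_\zeta(z_\zeta))\log|z_\zeta|$ for a suitable $z_\zeta\in\D$. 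The plan is then to glue the central disc $h$ with these many small ``cap'' discs $g_\zeta$ along the boundary circle, using the standard technique of gluing holomorphic sprays of discs (as in \cite[\S 5.8--\S 5.9]{Fbook} and as used in \cite{DF2}): after reparametrizing, one obtains a single disc $f\in\cO(\clD,X,x)$ whose values run, on most of its boundary circle, through the values $g_\zeta(z_\zeta)$, so that the point masses of $f$ coming from the caps contribute approximately $\frac{1}{2\pi}\int_0^{2\pi}\alpha(g_{\E^{\I t}}(z_{\E^{\I t}}))\log|z_{\E^{\I t}}|\,\d t \approx \frac{1}{2\pi}\int_0^{2\pi} k_\alpha(h(\E^{\I t}))\,\d t = P_{k_\alpha}(h)$, while the contribution from the core $h$ and from the gluing region is made negligible (one arranges that $\alpha\circ f$ times $\log|\cdot|$ integrates to something small there, e.g.\ by keeping the relevant radii close to $1$ and localizing where $\alpha$ is large). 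The multiplicities $m_f(z)$ only enter to our advantage when bounding $L_\alpha$ from above, but a careful accounting (as in \cite{Larusson-Sigurdsson2003}) shows they can be kept equal to $1$ at the inserted zeros, so $L_\alpha(f)$ and $\wt L_\alpha(f)$ agree up to $\delta$. Letting $\delta\to 0$ and taking the infimum over $h$ gives $EL_\alpha(x)\le EP_{k_\alpha}(x)=v_\alpha(x)$, closing the chain to the equalities \eqref{eq:equalities}.

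\textbf{Main obstacle.} The delicate point is the gluing construction and the bookkeeping of the Riesz/Lelong masses under reparametrization: one must ensure (a) that the glued disc $f$ is genuinely holomorphic on a neighborhood of $\clD$ with $f(0)=x$ and $f(\D)\not\subset X_{\sing}$, (b) that the total weighted mass $\sum_z\alpha(f(z))m_f(z)\log|z|$ picks up essentially the average of $k_\alpha\circ h$ over $\bT$ and nothing worse from the transition zone, and (c) uniformity so that errors can be driven below any prescribed $\delta$. On a complex manifold this is exactly the type of spray-gluing argument that underlies Rosay's theorem and its Lelong-functional analogues; I expect the bulk of the work to be the quantitative control of $\alpha\circ f$ near the inserted zeros and along the gluing annulus. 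For a complex space $X$ that is locally irreducible at every point, one performs this argument on $X_{\reg}$ verbatim, obtaining plurisubharmonicity of $EL_\alpha=E\wt L_\alpha=v_\alpha$ on $X_{\reg}$, and then extends across $X_{\sing}$ exactly as in the proof of Theorem \ref{desig_thm}: the fiber of a desingularization over a locally irreducible singular point is connected and compact, plurisubharmonic functions on $M$ are constant on it, so $EH_X(x)=\limsup_{x'\in X_{\reg},\,x'\to x}EH_X(x')$, and Demailly's extension theorem \cite[Th\'eor\`eme 1.7]{Dem} finishes the argument.
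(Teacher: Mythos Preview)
Your strategy is essentially the paper's: reduce to closing the chain in \eqref{eq:ineq-envelopes} by a spray-gluing construction that attaches ``cap'' discs (witnessing $k_\alpha$) along the boundary of a Poisson test disc $h$, and then read off the Lelong functional of the glued disc. The paper carries this out via an explicit elementary lemma (Lemma~\ref{lemma1}) counting solutions of $z^k=\zeta(z)$ near boundary arcs, but the architecture is the same as what you describe.

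One correction to your logical setup: you have the order of $EL_\alpha$ and $E\wt L_\alpha$ reversed. Since $m_f(z)\ge 1$ and $\log|z|<0$, one has $L_\alpha(f)\le \wt L_\alpha(f)$ and hence $EL_\alpha\le E\wt L_\alpha$; the chain \eqref{eq:ineq-envelopes} actually reads $EP_{k_\alpha}\le EL_\alpha\le E\wt L_\alpha\le k_\alpha$, so the inequality that must be proved to close the loop is $E\wt L_\alpha\le EP_{k_\alpha}$, not $EL_\alpha\le v_\alpha$. Fortunately your construction (which keeps the inserted zeros simple, so $L_\alpha(f)=\wt L_\alpha(f)$ on the glued disc) in fact yields $\wt L_\alpha(f)<P_{k_\alpha}(h)+\delta$, which is exactly the right target; the slip is therefore harmless for the argument, but the chain should be stated the correct way around.
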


On manifolds this was proved by L\'arusson and Sigurdsson, 
first for domains in Stein manifolds \cite{Larusson-Sigurdsson1998},
and then, following the work of Rosay \cite{Rosay1,Rosay2},
on all complex manifolds \cite{Larusson-Sigurdsson2003}.
We find their proof rather difficult even for domains in Stein manifolds.
Here we give a direct proof of the equalities (\ref{eq:equalities}) 
on the regular locus of any complex space.  
On locally irreducible complex spaces the result then follows 
by the arguments in \S\ref{sec:Poisson}.

We shall need the following elementary lemma.

\begin{lem}
\label{lemma1}
Let $J$ be a union of finitely many closed arcs in the circle $\bT=b\D$,
let $U\subset \C$ be an open set containing $J$, and let
$\zeta\colon U\cap \clD\to \C$ be a continuous function
satisfying $0<|\zeta(z)|<1$ for $z\in U\cap \clD$.
Given $\epsilon>0$, the following inequality 
holds for all sufficiently large integers $k\in \N$:
\begin{equation}
\label{eq:estimate1}
	\sum_{z\in U\cap \D,\ z^k=\zeta(z)} \log|z| < 
	\int_{\E^{\I t}\in J} \log|\zeta(\E^{\I t})| \, \frac{\d t}{2\pi} + \epsilon.
\end{equation}
\end{lem}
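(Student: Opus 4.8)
The plan is to turn the left-hand side of \eqref{eq:estimate1} into a Riemann-type sum and control it by the integral on the right. First I would set up the geometry: write $J=\bigcup_{i=1}^N J_i$ as a finite union of disjoint closed arcs, parametrized by angular intervals, and let $U'\Subset U$ be a slightly smaller neighborhood of $J$ so that $|\zeta|$ is bounded away from $0$ and $1$ on $\clD\cap \overline{U'}$, say $0<\delta\le |\zeta(z)|\le 1-\delta<1$ there. The key geometric fact is that for large $k$, the equation $z^k=\zeta(z)$ has, near each point $\E^{\I t_0}$ of $\bT$, exactly one solution in the annulus $\{\delta^{1/k}\le |z|\le (1-\delta)^{1/k}\}$ in each angular sector of width $2\pi/k$; this is a standard fixed-point/argument-principle count, since $z\mapsto \zeta(z)^{1/k}$ (any branch) is a contraction-type perturbation of the $k$-th roots of a point close to the unit circle. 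Concretely, the solutions $z$ with $z^k=\zeta(z)$ and $z\in U'\cap\D$ are approximately the points $|z|\approx |\zeta(z)|^{1/k}$, $\arg z \approx \frac{1}{k}(\arg\zeta(z)+2\pi m)$, and they are spaced at angular distance $\approx 2\pi/k$ along the arcs of $J$.

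Next I would estimate the sum. For each solution $z$ with $z^k=\zeta(z)$, we have $k\log|z|=\log|\zeta(z)|$, hence $\log|z|=\frac{1}{k}\log|\zeta(z)|$. Since $\log|\zeta|$ is uniformly continuous on the compact set $\clD\cap\overline{U'}$ and the solutions lying in $U'\cap\D$ that contribute are all within $O(1/k)$ of $J$ (in fact within $O(\delta/k)$ radially and clustered angularly along $J$), we get
\[
	\sum_{z\in U\cap\D,\ z^k=\zeta(z)} \log|z|
	= \frac{1}{k}\sum_{z} \log|\zeta(z)|
	= \frac{1}{k}\sum_{z}\bigl(\log|\zeta(\E^{\I \theta(z)})| + o(1)\bigr),
\]
where $\E^{\I\theta(z)}$ is the nearest point of $\bT$ to $z$ and the $o(1)$ is uniform as $k\to\infty$. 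Because there are $\frac{k}{2\pi}\,|J| + O(1)$ such solutions, equidistributed at angular spacing $2\pi/k$ along $J$, the right-hand side is a Riemann sum: $\frac{1}{k}\sum_z \log|\zeta(\E^{\I\theta(z)})| \to \int_{\E^{\I t}\in J}\log|\zeta(\E^{\I t})|\,\frac{\d t}{2\pi}$ as $k\to\infty$. Choosing $k$ large enough makes the difference less than $\epsilon$, which is \eqref{eq:estimate1}. I should also note that solutions $z$ with $z^k=\zeta(z)$ lying in $U\setminus U'$ but still in $\D$ can only occur near $\partial U'\cap\bT$, i.e. near the endpoints of the arcs $J_i$; there are $O(1)$ of them and each contributes $\log|z|=\frac{1}{k}\log|\zeta(z)|=O(1/k)\to 0$, so they are harmless (this is why the inequality, not equality, and the $+\epsilon$, are needed — spillover near endpoints is absorbed).

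The main obstacle, and the step deserving the most care, is the counting and localization of solutions of $z^k=\zeta(z)$: one must show that for large $k$ the solutions in $U\cap\D$ are exactly the expected $\approx\frac{k}{2\pi}|J|$ points, that they lie in a shrinking neighborhood of $J$, and that they are angularly equidistributed with spacing $2\pi/k+o(1/k)$ so that the Riemann-sum convergence is legitimate. I would handle this by fixing a branch of $\zeta^{1/k}$ on the simply connected pieces of $U'$, observing that $w\mapsto \zeta(w)^{1/k}$ maps $\clD\cap\overline{U'}$ into the annulus $\{\delta^{1/k}\le|w|\le (1-\delta)^{1/k}\}$ and is Lipschitz with constant $O(1/k)$ there (differentiating a $k$-th root near the unit circle gains a factor $1/k$), so the map $z\mapsto \zeta(z)^{1/k}$ on each small angular sector is a genuine contraction into itself, giving a unique fixed point per sector by Banach's theorem; summing over the $\approx \frac{k}{2\pi}|J|$ sectors covering $J$ gives the count, and uniform continuity of $\log|\zeta|$ finishes the estimate. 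The remaining verifications — that no solutions are missed outside these sectors within $U'\cap\D$, and that endpoint spillover is $O(1/k)$ — are routine compactness arguments.
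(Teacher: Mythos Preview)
Your plan is essentially the paper's: both exploit the identity $k\log|z|=\log|\zeta(z)|$ for each solution, argue that solutions are (at least) equidistributed at angular spacing $2\pi/k$ along $J$, and recognise the resulting sum as a Riemann sum for $\int_J\log|\zeta|\,\frac{\d t}{2\pi}$. The paper packages this as a piecewise-constant approximation of $\zeta$ over a partition of $J$ into short arcs and then locates solutions by a winding-number (Rouch\'e-type) argument rather than a fixed-point one.

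The one step that does not go through as written is the Banach contraction. You justify it by saying the $k$-th root gains a factor $1/k$, but that bounds only the outer function: the composition $T(z)=\zeta(z)^{1/k}$ satisfies $|T(z_1)-T(z_2)|\le \frac{C}{k}\,|\zeta(z_1)-\zeta(z_2)|$, and since the lemma assumes $\zeta$ merely continuous this need not be $\le c\,|z_1-z_2|$ with $c<1$. The repair is immediate and does not change your outline. The map $T$ sends a suitably placed closed annular sector into itself (radially because $|\zeta|^{1/k}\in[\delta^{1/k},(1-\delta)^{1/k}]$, angularly because the variation of $\arg\zeta$ over a sector of diameter $O(1/k)$ is $o(1)$), so Brouwer already gives a fixed point in each sector; alternatively, use the paper's winding-number count directly. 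Either way you obtain \emph{at least} one solution per sector, hence at least $\frac{k}{2\pi}|J|+O(1)$ solutions, and since every term $\log|z|$ is negative, extra or spillover solutions only push the left side of (\ref{eq:estimate1}) further down. Thus the one-sided inequality follows without the exact count or the full Riemann-sum convergence you aim for. In the paper's sole application the function $\zeta$ is in fact holomorphic, and there your contraction argument would go through verbatim.
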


\begin{proof}
This is obvious when $\zeta(z)=a$ is a constant function.
In that case the equation  $z^k=a$ has $k$ solutions 
\[
	z_j(a) = \sqrt[k]{|a|}\,\, \E^{\I(\theta_0+j/2k\pi)}, \qquad j=1,\ldots,k,
\]
where $a=|a|\,\E^{\I k\theta_0}$. 
Since the points $z_1(a),\ldots,z_k(a)$ are equidistributed along the circle
$|z|=\sqrt[k]{|a|}$, at least $k|J|$ of them belong
to $U\cap \D$ if $k$ is big enough. 
(Here $|J|$ denotes the normalized arc length of $J$.)
This gives
\[	 
		\sum_{z_j(a)\in U\cap \D} \log|z_j(a)| \le 
		k |J| \log \sqrt[k]{|a|} = \log |a|\cdotp |J|
		= \int_{\E^{\I t}\in J} \log|a| \, \frac{\d t}{2\pi}.
\]

In the general case we break $J$ into
pairwise disjoint closed segments $J_1,\ldots,J_m$
(separated by short gaps) such that, for some choice 
of points $\E^{\I t_j} \in J_j$ and open pairwise 
disjoint sets $U_j$ with $J_j \subset U_j\Subset U$,
we have 
\begin{equation}
\label{eq:estimate2}
		\sum_{j=1}^m \log|\zeta(\E^{\I t_j})| \cdotp |J_j| 
		< \int_{\E^{\I t}\in J} \log|\zeta(\E^{\I t})| \, \frac{\d t}{2\pi} 
		+ \frac{\epsilon}{2}
\end{equation}
and 
\begin{equation}
\label{eq:estimate3}
		\left| \zeta(z) - \zeta(\E^{\I t_j}) \right| <
		\frac{\epsilon}{2}\, |\zeta(\E^{\I t_j})|,
		\quad z\in U_j\cap \clD.
\end{equation}
It suffices to show that for every $j=1,\ldots, m$ we have that
\begin{equation}
\label{eq:estimate4}
  	\sum_{z\in U_j,\ z^k=\zeta(z)} \log|z| < 
  	\log |\zeta(\E^{\I t_j})| \cdotp |J_j| + \frac{\epsilon}{2}\, |J_j|.
\end{equation}
Indeed, by summing the inequalities (\ref{eq:estimate4})
over all $j=1,\ldots,m$ and using 
(\ref{eq:estimate2}) we obtain the estimate (\ref{eq:estimate1}).

We now prove (\ref{eq:estimate4}). Fix $j\in\{1,\ldots,m\}$ and write 
$a_j= \zeta(\E^{\I t_j})$, so $0<|a_j|<1$. 
Let $\Delta_j\subset \C$ be the open disc of radius 
$r_j=\frac{\epsilon}{2}|a_j|$ centered at $a_j$.
By choosing $\epsilon>0$ small enough we may assume that 
$\ol\Delta_j \subset \D^*=\D\setminus \{0\}$ for each $j$. 
Since the map $\D^*\to \D^*$, $z\mapsto z^k$, is a $k$-fold 
covering, the preimage of $\ol\Delta_j$
is a disjoint union of $k$ simply connected closed domains (discs)
$\ol\Delta_{j,l} \subset \D^*$, $l=1,\ldots,k$. 
As $k\to +\infty$, the discs $\Delta_{j,l}$ converge to 
the circle $\bT$ and are equidistributed around $\bT$.
For $k$ big enough at least the proportional number
$k|J_j|$ of the discs $\ol \Delta_{j,l}$ are contained in $U_j\cap \D$.
Let $\Delta_{j,l}$ be such a disc. 
As the point $z$ traces the boundary $b\Delta_{j,l}$ 
in the positive direction, the image point $z^k$ traces 
$b\Delta_j$ once in the positive direction. 
From the estimate (\ref{eq:estimate3})
we infer that the function $z\mapsto z^k-\zeta(z)$ has winding number one
around $b\Delta_{j,l}$, and hence the equation $z^k=\zeta(z)$
has a solution $z=z_{j,l}$ in $\Delta_{j,l}$.
(If the function $z\mapsto \zeta(z)$ is holomorphic, as will
be the case in our application, then there is precisely one
solution in  $\Delta_{j,l}$ by Rouch\'e's theorem.) 
Clearly this solution satisfies 
\[
	\log|z_{j,l}|= \frac{1}{k}\log|\zeta(z_{j,l})| < 
	\frac{1}{k} \log \left( |a_j|(1+\frac{\epsilon}{2}) \right)
	< \frac{1}{k} \left( \log|a_j| + \frac{\epsilon}{2} \right).
\]
Since there are at least $k|J_j|$ solutions $z_{j,l}$,
the sum of their logarithms is bounded above 
by $|J_j|\left( \log|a_j| + \frac{\epsilon}{2} \right)$ which 
gives (\ref{eq:estimate4}). 
\end{proof}

\noindent {\em Proof of Theorem \ref{thm:Lelong}.}
In view of (\ref{eq:ineq-envelopes}) and the equality
$\sup_{u\in \cF_\alpha} u = EP_{k_\alpha}$ 
(see the paragraph preceding Theorem \ref{thm:Lelong}) we have that
\[
	EP_{k_\alpha} \le EL_\alpha \le E\wt L_\alpha \le k_\alpha.
\]
To establish (\ref{eq:equalities})  it remains 
to prove that $E\wt L_\alpha \le EP_{k_\alpha}$.
Equivalently, we need to show that for every continuous function 
$\phi \colon X\to \R$ with $\phi\ge k_\alpha$, analytic disc $h\in \cA_X$,
and number $\epsilon>0$ there exists  a disc 
$f\in \cA_X$ such that $f(0)=h(0)$ and
\begin{equation}
\label{eq:mainestimate}
   \wt L_\alpha(f) = \sum_{z\in \D} \alpha(f(z)) \, \log|z| 
   < \frac{1}{2\pi} \int_0^{2\pi} (\phi\circ h)(\E^{\I t}) \, \d t  +\epsilon. 
\end{equation}

The definition of the Poisson envelope $EP_{k_\alpha}$ of the function 
$k_\alpha$ shows that for every fixed $t_0\in \R$ there exist an analytic disc
$g_0\in \cA_X$ and a point $b_0\in \D$ such that 
$g_0(0)=h(\E^{\I t_0})=:x_0$ and 
\[
	\alpha(g_0(b_0)) \log|b_0| < \phi(x_0) +\frac{\epsilon}{2}.
\]
We embed $g_0$ into a family (spray) of analytic discs $g_x=g(x,\cdotp)\in \cA_X$,
depending holomorphically on the point $x$ in an open neighborhood
$V\subset X$ of $x_0$, such that $g_0=g(x_0,\cdotp)$ is the initial disc,
and for all $x\in V$ we have $g(x,0)=x$ and $g(x,b_0)=g(x_0,b_0)=:y_0$. 
By continuity there is a nontrivial closed arc $J\subset \bT$
around the point $\E^{\I t_0}$ such that $h(J)\subset V$ and 
\[
	\alpha(y_0)\, \log|b_0| \cdotp |J| < 
	\int_J (\phi\circ h)(\E^{\I t}) \frac{\d t}{2\pi} + \frac{\epsilon}{2} |J|.
\]
Repeating this argument at other points of the circle $\bT$
we find 
\begin{itemize}
\item pairwise disjoint closed arcs $J_1,\ldots, J_m\subset \bT$
with arbitrary short gaps between them,
\item points $\E^{\I t_j}\in J_j$, 
\item open sets $V_1,\ldots, V_m\subset X$ with $h(J_j)\subset V_j$ for $j=1,\ldots,m$, 
\item holomorphic sprays of discs $g_j\colon V_j\times\clD \to X$, and
\item points $b_1,\ldots,b_m\in \D$, 
\end{itemize}
such that the following properties hold:
\begin{itemize}
\item[\rm (a)] $g_j(x,0)=x$ for all $x\in V_j$ and $j=1,\ldots,m$,
\item[\rm (b)] the point $y_j=g_j(x,b_j)\in X$ is independent of $x\in V_j$, and
\end{itemize}
\begin{equation}
\label{eq:estimate-c} 
		\sum_{j=1}^m \alpha(y_j) \log |b_j|\cdotp |J_j| 
		< \int_0^{2\pi} (\phi\circ h)(\E^{\I t}) \, \frac{\d t}{2\pi} 
		+ \frac{\epsilon}{2}. 
\end{equation}
The integral of $\phi\circ h$ over $\bT\setminus \cup_{j=1}^m J_j$
is made small by choosing the arcs $J_j$ such that the measure 
of the complement is sufficiently small.
We may assume that $\alpha(y_j)>0$ for each $j$, for otherwise 
the corresponding term and the arc $J_j$ may be deleted.

Choose smoothly bounded simply connected sets (discs)
$\Delta_1,\ldots,\Delta_m$ in $\D$ with pairwise disjoint closures
such that $b\Delta_j\cap \bT$ contains a relative neighborhood of the arc
$J_j$ in the circle $\bT=b\D$. By choosing the discs $\Delta_j$
small enough we can also insure that $h(\ol\Delta_j)\subset V_j$.
Pick a larger arc $J'_j\subset \bT$ such that 
$J_j\Subset J'_j \Subset b\Delta_j\cap \bT$.
Set $D_1=\bigcup_{j=1}^m \Delta_j$.
Let $D_0\subset \D$ be a domain obtained by denting the circle $\bT$ 
slightly inward along each of the arcs $J'_j$ so as to insure
that $\ol D_0 \cap J'_j=\emptyset$ for all $j=1,\ldots,m$,
$D_0\cup D_1 =\D$, and 
$\ol{D_0\setminus D_1} \cap \ol{D_1\setminus D_0} =\emptyset$.
Thus $(D_0,D_1)$ is a Cartan pair in the sense of \cite[\S 5.7]{Fbook}.
The configuration around the disc $\Delta_j$ 
is shown on Fig.\ \ref{Fig1}.

%
%
%
%
\begin{figure}[ht]
\psset{unit=0.6cm, linewidth=0.6pt}  
\begin{pspicture}(-8,0)(8,7)

\psarc[linewidth=0.4pt](0,-4){10}{35}{145}
\psarc[linewidth=1.2pt,arrows=*-*](0,-4){10}{63}{117}
\psecurve(-5,4.7)(-6,4)(-6.5,3)(-4,2.5)(0,4) (4,2.5)(6.5,3)(6,4)(5,4.7)
\psarc[linewidth=1pt,linestyle=dotted](0,-10){15.3}{68}{112}

\rput(0,6.6){$J'_j$}
\rput(4.5,3.3){$\Delta_j$}
\rput(0,2){$\D$}
\rput(0,4.8){$bD_0$}
\psline[linewidth=0.2pt]{->}(0.6,4.8)(1.7,5.1)
\psline[linewidth=0.2pt]{->}(-0.6,4.8)(-1.7,5.1)

\end{pspicture}
\caption{The disc $\Delta_j$ and the arc $J'_j$}
\label{Fig1}
\end{figure}
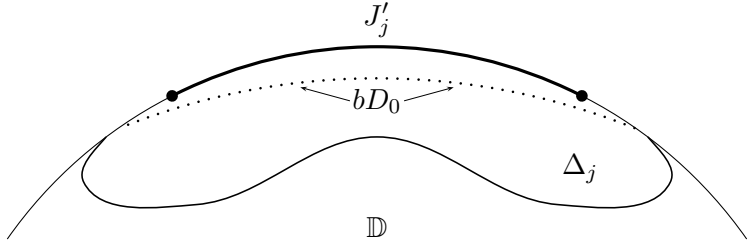

Pick a large constant $M>0$ whose precise value will be fixed later.
For every $j=1,\ldots,m$ let $u_j\le 0$ be a smooth real function on  
$\ol\Delta_j$, harmonic in $\Delta_j$, such that $u_j=0$
on $J_j$, and $u_j=-M$ on $b\Delta_j\setminus J'_j$.
Let $v_j$ be a harmonic conjugate of $u_j$, and define the function
$a_j\colon \ol\Delta_j\to \C$ by
\[
	a_j(z)= z^k \E^{u_j(z)+\I v_j(z)}, \quad z\in \ol\Delta_j.
\]
The value of the integer $k\in \N$ will be fixed later.
Note that $|a_j(z)|\le \E^{u_j(z)}$ for every $k$, and 
this is uniformly as close to zero as desired outside of any 
neighborhood of $J'_j$ if the constant $M>0$ is big enough.

Let $z_{j,1},\ldots,z_{j,l_j}\in \Delta_j$ be all the solutions
of the equation $a_j(z)=b_j$ in the disc $\Delta_j$. 
(Recall that the number $b_j\in \D^*$ satisfies property (b) above.)
This equation can be rewritten as 
\[
	z^k=b_j \E^{-u_j(z)-\I v_j(z)}=: \zeta_j(z).
\]
Note that $|\zeta_j(\E^{\I t})|=|b_j|$ for $\E^{\I t} \in J_j$.
By Lemma \ref{lemma1} we have for all large enough $k\in \N$ that  
\begin{equation}
\label{estimate6}
		\sum_{l=1}^{l_j} \log|z_{j,l}| < 
		\int_{J_j} \log|\zeta_j(\E^{\I t})| \, \frac{\d t}{2\pi} 
		+ \frac{\epsilon |J_j|}{2\alpha(y_j)} 
		= \log|b_j| \cdotp |J_j| + \frac{\epsilon |J_j|}{2 \alpha(y_j)}. 
\end{equation}

To complete the proof we now construct 
a holomorphic disc $f\in \cA_X$ with $f(0)=h(0)$ such that
\begin{equation}
\label{eq:f}
	f(z_{j,l}) = y_j,\qquad l=1,\ldots,l_j,\ j=1,\ldots,m.
\end{equation} 
For such $f$ we obtain by combining the inequalities 
(\ref{eq:estimate-c}) and (\ref{estimate6}) that
\begin{eqnarray*}
	\wt L_\alpha(f) &\le & 
	\sum_{j=1}^m \alpha(y_j) \sum_{l=1}^{l_j} \log|z_{j,l}| \cr
	&\le & \sum_{j=1}^m \alpha(y_j)\log|b_j| \cdotp |J_j| + \frac{\epsilon}{2} 
	 <  \int_0^{2\pi} (\phi\circ h)(\E^{\I t}) \, \frac{\d t}{2\pi}	+ \epsilon, 
\end{eqnarray*}
so the estimate (\ref{eq:mainestimate}) holds and the proof is complete.

To find such a disc $f$ we proceed as follows. 
We embed the disc $h$ into a dominating spray of discs
$h_w=h(w,\cdotp)\in \cA_X$, depending holomorphically on the 
point $w$ in a ball $\B$ in a Euclidean space $\C^N$, so that $h_0=h$.
By shrinking $\B$ around the origin we may assume that
$h(w,z)\in V_j$ holds for every $w\in \B$ and $z\in \ol\Delta_j$.
Over each of the discs $\ol\Delta_j$ we define a holomorphic spray 
of discs, with the parameter $w\in \B$, by setting
\[
	\wt g(w,z)= g_j\bigl( h(w,z),a_j(z)\bigr),
	\qquad z\in \ol\Delta_j.
\]
Since $a_j(z_{j,l})=b_j$ and $g_j(x,b_j)=y_j$ for all $x\in V_j$, we have  
$ 
	\wt g(w,z_{j,l}) = y_j
$ 
for all $w\in \B$, $j=1,\ldots,m$ and $l=1,\ldots,j_l$.
Further, for points $z\in \ol\Delta_j\cap \ol D_0$ 
the function $|a_j(z)|$ can be made arbitrary small 
by choosing the constant $M>0$ in the above construction big enough 
(and this estimate is independent of the choice of the integer $k\in \N$). 
For such $z$ we have 
\[
	\wt g(w,z) \approx g(h(w,z),0) = h(w,z).
\]
In particular, for $M>0$ big enough (and for any $k\in \N$) 
the sprays $h(w,z)$ (over $z\in  \ol D_0$) and $\wt g(w,z)$ 
(over $z\in \ol D_1$) can be glued into a single spray 
over $\ol D_0\cup \ol D_1=\ol \D$ by using the method from
\cite[\S 5.8 -- \S 5.9]{Fbook}.
(A brief exposition can also be found in \cite{DF2}.) 
Explicitly, we find maps $\beta_0\in \cA(D_0,\C^N)$
and $\beta_1\in \cA(D_1,\C^N)$, with values in $\B$, that 
are uniformly small (depending only on the uniform distance between 
the two sprays over $\ol D_0\cap \ol D_1$, and hence only on 
the constant $M>0$), such that $\beta_0(0)=0$ and
\[	
	\wt g(\beta_1(z),z) = h(\beta_0(z),z),\quad z\in \ol D_0\cap \ol D_1.
\] 
The two sides define an analytic disc $f\in \cA_X$ with center $f(0)=h(0)$ and
satisfying (\ref{eq:f}). This completes the proof of Theorem \ref{thm:Lelong}.
\qed


\section{Siciak-Zaharyuta extremal functions on affine varieties}
\label{sec:Siciak}
In this section we obtain explicit expressions for the Siciak-Zaharyuta 
extremal function $V_{\Omega,X}$ of an open set $\Omega$ in a
locally irreducible affine algebraic variety $X\subset \C^n$ 
in terms of Green functions on complex curves $C$ in the projective 
closure $\ol X\subset \P^n$ of $X$, with smooth boundaries 
$bC$ contained in $\Omega$. Theorem \ref{Maximalfunction-variety} 
below generalizes some of the results of Lempert and of
L\'arusson and Sigurdsson (for the case $X=\C^n$)  mentioned below.  

We begin by recalling some standard notions of pluripotential theory,
referring to Klimek \cite{Klimek} for further information. 

The {\em Lelong class} $\cL=\cL_{\C^n}$ on $\C^n$ 
is the set of all plurisubharmonic functions $v\colon \C^n\to\R\cup\{-\infty\}$
for which there exist constants $r>0$ and $C\in \R$ 
(depending on $v$) such that
\[
	v(z) \le \log |z| + C, \qquad z\in \C^n, \ |z|>r.
\]
Such function is said to have at most logarithmic growth at infinity.

Given a nonempty open subset $\Omega\subset \C^n$, the
{\em Siciak-Zaharyuta maximal function} $V_\Omega\colon\C^n\to \R$
is defined by 
\[
	V_\Omega(z) = \sup\{v(z)\colon v\in\cL,\ v|_\Omega\le 0\}.
\] 
The function $V_\Omega$ is the largest  function in the Lelong class 
$\cL$ which is $\le 0$ on $\Omega$.
By replacing any function $v\in \cL$ in the definition of $V_\Omega$ by $\max\{v,0\}$  
we see that $V_\Omega\ge 0$ on $\C^n$ and $V_\Omega=0$ on $\Omega$.
If $\Omega$ has a sufficiently nice boundary then 
$V_\Omega$ is continuous, and hence it vanishes on $\ol\Omega$.
(The extremal function $V_E$ can be defined for an arbitrary subset $E\subset \C^n$,
but in general one must take its upper regularization $V^*_E$ in order to get 
a plurisubharmonic function. We have $V^*_E\equiv +\infty$ if and only if 
the set $E$ is pluripolar. Here we restrict our attention to maximal functions
of open sets.) 

In the same way one defines the Lelong class $\cL_X$, and the maximal 
function $V_{\Omega,X}$, when $\Omega$ is an open subset in a closed
affine algebraic subvariety $X$ of a complex affine space $\C^n$.

We now recall the Lempert formula for the extremal function 
$V_\Omega$ of an open convex set $\Omega\subset \C^n$ 
(see the appendix in \cite{Momm} and the discussion in 
\cite{Larusson-Sigurdsson2005}). Consider $\C^n$ as a subset of the projective
space $\P^n$, and let $H=\P^n\setminus \C^n \cong\P^{n-1}$ denote 
the hyperplane at infinity. For every analytic disc $f\in \cA_{\P^n}$
with $f(\bT)\subset \C^n$ set
\begin{equation}
\label{eq:Jf}
		J(f)= -\sum_{f(\zeta)\in H} \log|\zeta| \ge 0,
\end{equation}
where the sum is over the finitely many points $\zeta\in \D$ which
are mapped by $f$ to $H$, counted with intersection multiplicities.
(If $f(0)\in H$, we set $J(f)=+\infty$, and if $f(\clD)\subset \C^n$
then $J(f)=0$.)  	We may think of $J$ as a disc functional on the set of discs
in $\P^n$ with boundary values in $\C^n$. Lempert proved that,
if $\Omega$ is open and convex, then for every point $z\in \C^n$ we have
\begin{equation}
\label{eq:Lempert-formula}
	V_\Omega(z) = \inf \left\{J(f) \colon 
	f\in \cA_{\P^n},\ f(0)=z,\ f(\bT)\subset \Omega\right\};
\end{equation}
furthermore, one gets the same infimum over the smaller set of discs 
with a single point at infinity of multiplicity one.

Lempert's formula (\ref{eq:Lempert-formula}) was extended  
by L\'arusson and Sigurdsson to the case when $\Omega$ is an arbitrary 
{\em connected} open subset of $\C^n$ \cite[Theorem 3]{Larusson-Sigurdsson2005};
however, one must in general use discs with several poles.

If $\Omega$ is disconnected, then the infimum on the right hand side
of (\ref{eq:Lempert-formula}) is in general larger 
than $V_\Omega(z)$.
However, it is still possible to obtain $V_\Omega$ as follows.
Let $\cB$ be a family of analytic discs in $\P^n$ with boundary 
values in $\Omega$. Following \cite{Larusson-Sigurdsson2005}
we introduce the following notion. 

%
%
%
%
\begin{defin}
\label{def:good}
A family of discs $\cB \subset \cA(\D,\P^n)$ with boundaries
in $\Omega\subset \C^n$ is a {\em good family} 
(with respect to $\Omega$) if it satisfies the following properties:
\begin{itemize}
\item[\rm (i)]   for every point $z\in \P^n$ there is a disc $f\in \cB$ with $f(0)=z$,
\item[\rm (ii)]   for every point $z\in \Omega$ the constant disc $\clD\to z$ belongs to $\cB$, and
\item[\rm (iii)] for every point $p\in \P^n$ and every disc $f\in \cB$ with $f(0)=p$
there exist a neighborhood $U\subset \P^n$ of $p$ and a continuous family 
of discs $\{f_z\in \cB\colon z\in U\}$ such that $f_{p}=f$ and $f_z(0)=z$ for all $z\in U$.
\end{itemize}
\end{defin}

Define a function $E_\cB J \colon \C^n\to \R_+$ by setting
\begin{equation}
\label{eq:EBJ}
	E_{\cB} J (z) = \inf_{f\in \cB,\ f(0)=z} J(f),\quad z\in\C^n.
\end{equation}
We think of $E_\cB J$ as the envelope of the disc functional 
$f\mapsto J(f)$ with respect to the family $\cB$. 
It is easily seen that the envelope $E_\cB J$ with respect to 
a good family of discs is an upper semicontinuous
function on $\C^n$ which vanishes on $\Omega$ and has at most logarithmic
growth at infinity (see \cite{Larusson-Sigurdsson2005}).

%
%
%
%
\begin{thm}
\label{LSThm2}
{\rm \cite[Theorem 2]{Larusson-Sigurdsson2005}}
If $\Omega$ is a nonempty open set in $\C^n$ and $\cB$ is a good family
of analytic discs with respect to $\Omega$, then the 
Siciak-Zaharyuta  function $V_\Omega$ is the Poisson envelope
of the function $E_\cB J$ (\ref{eq:EBJ}):
\begin{equation}
\label{eq:Lempert-env}
	V_\Omega(z) = \inf \Big\{\int^{2\pi}_0 (E_\cB J)(f(\E^{\I t}))\, 
   \frac{\d t}{2\pi} \colon \ f\in \cA(\D,\C^n,z) \Big\},
    \quad z\in \C^n. 
\end{equation}
\end{thm}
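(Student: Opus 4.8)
The plan is to identify the right-hand side of (\ref{eq:Lempert-env}) with the Poisson envelope $EP_{E_\cB J}$ of $E_\cB J$ --- this uses that, for the Poisson functional, the envelope over $\cA(\D,\C^n)$ coincides with the one over $\cO(\clD,\C^n)$ --- and then to establish the two inequalities $EP_{E_\cB J}\le V_\Omega$ and $V_\Omega\le EP_{E_\cB J}$ on $\C^n$. Since $\C^n$ is locally irreducible, Theorem \ref{Poletsky-Rosay} applies: $EP_{E_\cB J}$ is plurisubharmonic on $\C^n$ and is the largest plurisubharmonic minorant of the upper semicontinuous function $E_\cB J$.

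The inequality $EP_{E_\cB J}\le V_\Omega$ is the easy half. As $E_\cB J\ge 0$ we have $EP_{E_\cB J}\ge 0$, so this is a genuine plurisubharmonic function; being a minorant of $E_\cB J$, it satisfies $EP_{E_\cB J}\le 0$ on $\Omega$ (where $E_\cB J$ vanishes) and $EP_{E_\cB J}(z)\le E_\cB J(z)\le\log|z|+C$ for $|z|$ large, so $EP_{E_\cB J}$ belongs to the Lelong class $\cL$ and is $\le 0$ on $\Omega$. By the definition of $V_\Omega$ this gives $EP_{E_\cB J}\le V_\Omega$.

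The heart of the reverse inequality is the pointwise bound $V_\Omega\le E_\cB J$ on $\C^n$. To prove it, fix $z\in\C^n$, a disc $f\in\cB$ with $f(0)=z$ (hence $f(\bT)\subset\Omega$), and a function $v\in\cL$ with $v|_\Omega\le 0$; I will show that $v(z)\le J(f)$. Let $\zeta_1,\dots,\zeta_N\in\D$ be the (finitely many) points with $f(\zeta_i)\in H$, with intersection multiplicities $m_i$. Then $v\circ f$ is subharmonic on $\D\setminus\{\zeta_1,\dots,\zeta_N\}$, and, working in affine coordinates on $\C^n=\P^n\setminus H$, near $\zeta_i$ one has $|f(\zeta)|\le c\,|\zeta-\zeta_i|^{-m_i}$, so the logarithmic growth of $v$ forces $v(f(\zeta))\le -m_i\log|\zeta-\zeta_i|+O(1)$ there. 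Hence
\[
	h(\zeta):=v(f(\zeta))+\sum_{i=1}^{N}m_i\log\left|\frac{\zeta-\zeta_i}{1-\bar\zeta_i\zeta}\right|
\]
is bounded above near each $\zeta_i$ and thus extends to a subharmonic function on $\D$, the Blaschke factors differing from $\log|\zeta-\zeta_i|$ by harmonic terms. On $\bT$ the Blaschke factors have modulus $1$, so there $h=v\circ f\le 0$ because $f(\bT)\subset\Omega$; by the maximum principle $h\le 0$ on $\clD$, and evaluating at $\zeta=0$ gives $v(z)+\sum_i m_i\log|\zeta_i|\le 0$, i.e.\ $v(z)\le J(f)$. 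Taking the supremum over such $v$ and then the infimum over $f\in\cB$ with $f(0)=z$ yields $V_\Omega\le E_\cB J$.

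To finish, recall that for a nonempty open set $\Omega$ the function $V_\Omega$ is plurisubharmonic: its upper regularization $V_\Omega^*$ is plurisubharmonic, lies in $\cL$ (being dominated by the extremal function of a ball $B\subset\Omega$), and vanishes on the open set $\Omega$, hence $V_\Omega^*\le V_\Omega$ and so $V_\Omega=V_\Omega^*$. Thus $V_\Omega$ is a plurisubharmonic minorant of $E_\cB J$, whence $V_\Omega\le EP_{E_\cB J}$ by the maximality in Theorem \ref{Poletsky-Rosay}. Combining the two inequalities gives $V_\Omega=EP_{E_\cB J}$, which is (\ref{eq:Lempert-env}). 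I expect the main obstacle to be the bound $v(f(0))\le J(f)$: one must treat the poles of $f$ on the hyperplane at infinity carefully --- extracting the singularity estimate from the logarithmic growth of $v$, verifying that the Blaschke-corrected function extends subharmonically across those poles, and applying the maximum principle up to $\bT$, where $f$ is only continuous.
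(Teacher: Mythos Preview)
Your proof is correct and follows essentially the same route as the paper: both establish the key pointwise bound $V_\Omega\le E_\cB J$ by showing that $V_\Omega\circ f$ (or equivalently $v\circ f$ for any $v\in\cL$ with $v|_\Omega\le 0$) plus the Green function with poles at $f^{-1}(H)$ extends subharmonically across the poles and is $\le 0$ on $b\D$, then invoke Poletsky's theorem and the logarithmic growth of $E_\cB J$ to conclude $V_\Omega=EP_{E_\cB J}$. The only cosmetic difference is that the paper phrases the pole estimate via Green functions on bordered Riemann surfaces before specializing to the disc, and it bypasses your separate verification that $V_\Omega$ is plurisubharmonic by arguing directly that every competitor $v$ in the definition of $V_\Omega$ is a plurisubharmonic minorant of $E_\cB J$, hence $\le EP_{E_\cB J}$.
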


\begin{rem}
\label{on-Lemperts-formula}
If the set $\Omega$ is connected, then the Lempert formula (\ref{eq:Lempert-formula})
follows by applying Theorem \ref{LSThm2}, with $\cB$ a suitable family of discs 
in projective lines (and with boundaries in $\Omega$), and then 
solving a Riemann-Hilbert boundary value problem.
(See the last section in \cite{Larusson-Sigurdsson2005}.) 
The advantage of the formula (\ref{eq:Lempert-formula}) over 
(\ref{eq:Lempert-env})  is that the first one expresses the 
extremal function $V_\Omega$ by only one application of infimum 
over a suitably large family of discs, while on the latter the infimum
is applied twice.
\qed\end{rem}

L\'arusson and Sigurdsson proved Theorem \ref{LSThm2} by  
lifting the problem with respect to the projection
$\pi\colon \C^{n+1}\setminus\{0\} \to \P^n$ and
considering the plurisubharmonic function 
$V_\Omega\circ\pi + \log |z_0|$ on $\C^{n+1}\setminus\{0\}$, 
where the coordinates $(z_0,\ldots,z_n)$ on $\C^{n+1}$ 
are chosen such that $H=\{z_0=0\}$.
An application of the Riesz formula on the disc leads to an 
auxiliary disc formula \cite[Theorem 1]{Larusson-Sigurdsson2005}
from which the result is obtained by some additional arguments.

We now give a very simple proof of Theorem \ref{LSThm2}
which generalizes immediately to open sets in affine varieties.
The main point is to observe that the restriction of $V_\Omega$
to any complex curve $C\subset \P^n$ with smooth boundary contained
in $\Omega\subset \C^n$ is bounded above by the Green function on that curve
with poles at the points in $C\cap H$. 
(In the case at hand we can use discs, and this
brings the functional $J(f)$ into the picture.)
The infimum of such Green functions over sufficiently many curves 
yields an upper semicontinuous function $\Psi\colon \C^n\to\R_+$ 
that vanishes on $\Omega$, has at most logarithmic growth at infinity, 
and satisfies $V_\Omega\le \Psi$. 
It then follows from maximality that $V_\Omega$ is the Poisson 
envelope $EP_\Psi$ of $\Psi$.

\smallskip
\noindent{\it Proof of Theorem \ref{LSThm2}.}
Let $\Omega\subset \C^n\subset \P^n$ be as in the theorem.
Assume that $\ol \Sigma$ is a compact finite bordered 
Riemann surface all of whose boundary components are Jordan curves.
Let $f\colon \ol\Sigma \to \P^n$ be a continuous map that is 
holomorphic in the interior, $\Sigma$, of $\ol \Sigma$,
and with boundary $f(b\Sigma)\subset \Omega$ contained in $\Omega$.
Recall that $H=\P^n\setminus \C^n$.
Write $f^{*}(H)=\sum_{j=1}^k m_j p_j$ as a divisior,
where $p_1,\ldots,p_k$ are the  points in 
$\Sigma$ that are mapped by $f$ to the hyperplane $H$, 
and $m_j\in\N$ is the intersection multiplicity of 
the map $f$ with $H$ at the point $p_j$. The composition 
\[
	V_\Omega\circ f\colon \ol\Sigma\to\R_+ \cup\{+\infty\}
\]
is then a subharmonic function on $\ol\Sigma\setminus f^{-1}(H)$ 
that vanishes on $b\Sigma$ and has logarithmic poles 
at the points $p_j\in f^{-1}(H)$. More precisely, 
choosing a local holomorphic coordinate $\zeta$ on $\Sigma$
with $\zeta(p_j)=0$, there is a constant $C\in\R$ such that 
$V_\Omega\circ f(\zeta) \le - m_j \log|\zeta| + C$ as $\zeta\to 0$.
It follows that 
\[
	V_\Omega\circ f\le - G_{\Sigma,f^*H}= - \sum_{j=1}^k m_j G_{\Sigma,p_j},
\] 
where the right-hand side is the Green function on $\Sigma$ with poles 
determined by the divisor $f^* H$.  (Precisely, $G_{\Sigma,f^*H}\le 0$	 
is the unique continuous function on $\ol\Sigma\bs f^{-1}(H)$
that is harmonic in $\Sigma\bs f^{-1}(H)$, vanishes on the boundary $b\Sigma$,
and has a logarithmic pole with multiplicity $m_j$ at each of the points 
$p_j\in f^{-1}(H)$. For Green functions see any of the 
standard sources, or the recent book by Varolin \cite[p.\ 119]{Varolin}.) 
The above inequality follows by observing that
$V_\Omega\circ f + G_{\Sigma,f^*H}$ is a subharmonic function on 
$\Sigma \setminus f^{-1}(H)$ which equals zero on $b\Sigma$ and is locally bounded
from above at each point $p_j\in f^{-1}(H)$; hence it extends as a subharmonic 
function on $\Sigma$, and the maximum principle implies that it is
$\le 0$ on $\Sigma$. 

We now restrict attention to the case when $\Sigma=\D$ is the disc.
The Green function with pole at the point $a\in \D$ equals 
\[
	G_a(\zeta)= \log \left|\frac{\zeta-a}{1-\bar a \zeta}\right|.
\]
For $a\ne 0$ we get $G_a(0)= \log|a|$.
Given a disc $f\in\cA_{\P^n}$ with $f(\bT) \subset \Omega$
and $f^*H = \sum_{j=1}^k m_j a_j$, where $a_j \in \D$ and $m_j\in\N$,
we thus have 
\[
	V_\Omega (f(0)) \le -\sum_{j=1}^k m_j\log|a_j| = J(f). 
\]
Therefore we have for each point $z\in \C^n$ the estimate 
\[
	V_\Omega(z)\le \Psi(z):= \inf_{f\in \cB,\ f(0)=z} J(f).
\]
If the family $\cB$ is good in the sense of Def.\ \ref{def:good}, 
then the function $\Psi\colon \C^n\to\R_+$ is upper
semicontinuous, $\Psi|_\Omega=0$, and $\Psi$ has at most logarithmic
growth at infinity. Poletsky's theorem (Theorem  \ref{Poletsky-Rosay}
for $X=\C^n$) implies that the Poisson envelope $EP_\Psi$
is the maximal plurisubharmonic minorant of $\Psi$. 
Since $\Psi$ grows logarithmically, we infer that 
$EP_\Psi$ belongs to the Lelong class $\cL$. 
Finally, as $V_\Omega\le \Psi$, we have $V_\Omega\le EP_\Psi$,
and hence $V_\Omega = EP_\Psi$ by maximality of $V_\Omega$. 
This proves Theorem \ref{LSThm2}.
\qed
\smallskip

The above proof generalizes immediately to the following situation.
Let $\Omega$ be a non\-empty open subset in an affine algebraic
variety $X\subset \C^n$. 
The Lelong class $\cL_X$, and the maximal function 
$V_{\Omega,X}$, are defined in essentially 
the same way as in the case $X=\C^n$. 
Assume now that $X$ is irreducible,
and let $k=\dim_\C X \in\{1,2,\ldots,n-1\}$. 
Denote by $\ol X$ the closure of $X$ in $\P^n$ 
(an algebraic subvariety of $\P^n$).
Complex curves $C\subset \ol X$ whose boundaries $bC$ are smooth
and contained in the open subset $\Omega \subset X$
fill the entire projective variety $\ol X$.
(We consider only curves that have no isolated points 
in their boundaries.) 
An explicit way to obtain such curves is to take the intersection
of $\ol X$ with a generic projective linear subspaces
$\Lambda\subset \P^n$ of dimension $n-k+1$ 
such that $\Lambda\cap \Omega\ne \emptyset$, 
and then remove from the closed projective curve 
$\Lambda \cap \ol X$ finitely many smoothly bounded 
disc lying in $\Omega$. If $\ol X$ is smooth (without singularities), 
then a generic such intersection will be a smooth embedded 
complex curve with boundary, but in general 
we can not expect it to be a disc. In fact, the degree 
of a generic curve $\Lambda \cap \ol X$ equals the degree of $X$.

Given a curve $C\subset \ol X$ with smooth boundary $bC\subset\Omega$, 
let $f \colon \ol\Sigma \to \ol C$ be a normalization 
of $C$ by a finite bordered Riemann surface, $\Sigma$,
with smooth boundary. 
The normalization map extends smoothly to the boundary. 
Let $f^*H=\sum_{j=1}^d m_j p_j$ denote the intersection divisor of 
the map $g$ with the hyperplane at infinity $H=\P^n\bs \C^n$. 
As before, let $G_{\Sigma,f^*H}$ be the Green function
on $\Sigma$ with logarithmic poles determined by the divisor $f^*H$.
Choose a family $\cB$ of such normalized curves 
$(\Sigma,f,C)$ in $X$ whose images $C=f(\Sigma)$ fill $\ol X$, 
and define the function $\Psi_\cB \colon X\to \R_+$ by
\begin{equation}
\label{eq:PsiB}
	\Psi_\cB(x) = \inf - G_{\Sigma,f^*H}(\zeta) 
	= -\sup G_{\Sigma,f^*H}(\zeta),\quad x\in X,
\end{equation}
where the infimum is over all $(\Sigma,f,C) \in \cB$ 
and points $\zeta\in \Sigma$ with $f(\zeta)=x$. 
By including in $\cB$ all the constant discs $\ol\D \mapsto z \in \Omega$,
and by assuming the existence of continuous 
local families of curves in $\cB$ (in analogy to property (iii) in
Def.\ \ref{def:good}), we insure as before that $\Psi_\cB$ 
is an upper semicontinuous function on $X$ that equals zero
on $\Omega$ and grows at most logarithmically at infinity.
(The continuity of a family of curves with respect to some parameter 
$t$ can be made precise by fixing a smooth oriented 
real surface with boundary, $\ol\Sigma$, and choosing a 
continuous family of almost complex structures $J_t$ on $\ol\Sigma$, 
and a continuous family of holomorphic maps 
$f_t\colon (\ol\Sigma,J_t) \to X$.) 

The argument in the proof of Theorem \ref{LSThm2} shows that 
for each complex curve $f\colon \overline \Sigma \to \ol C \subset \ol X$ 
with $f(b\Sigma) \subset \Omega$ we have
$V_{\Omega,X}\circ f \le - G_{\Sigma,f^*H}$, and hence
$V_{X,\Omega}\le \Psi_\cB$. By applying the general case of 
Theorem \ref{Poletsky-Rosay} we thus obtain  the following 
expression for the maximal function $V_{\Omega,X}$. 

%
%
\begin{thm}
\label{Maximalfunction-variety}
Let $X$ be an irreducible and locally irreducible
algebraic subvariety of $\C^n$, and let $\Omega$ be a nonempty
open set in $X$. Assume that $\cB$ is a good family of complex curves 
in $X$ with boundaries in $\Omega$, and let $\Psi_\cB \colon X\to \R_+$ 
denote the associated function (\ref{eq:PsiB}). Then the Siciak-Zaharyuta 
function $V_{\Omega,X}$ is the envelope of the Poisson
functional $P_\Psi$:
\[
	 V_{\Omega,X}(x) = 
	 \inf \Big\{\int^{2\pi}_0 \Psi_\cB (f(\E^{\I t}))\, 
   \frac{\d t}{2\pi} \colon \ f\in \cO(\clD,X,x) \Big\},
    \quad x\in X. 
\]
\end{thm}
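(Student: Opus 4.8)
The plan is to transcribe the proof of Theorem~\ref{LSThm2} to the present setting, with the normalized curves $(\Sigma,f,C)\in\cB$ playing the role of the analytic discs in $\P^n$ and the Green functions $-G_{\Sigma,f^*H}$ playing the role of the functional $J$; the place of Poletsky's theorem on $\C^n$ is taken by its locally irreducible version, Theorem~\ref{Poletsky-Rosay}.

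First I would prove the domination $V_{\Omega,X}\le\Psi_\cB$ on $X$. Fix $(\Sigma,f,C)\in\cB$, so that $f\colon\ol\Sigma\to\ol C\subset\ol X$ is holomorphic on $\Sigma$, smooth up to $b\Sigma$, with $f(b\Sigma)\subset\Omega$ and intersection divisor $f^*H=\sum_{j=1}^d m_jp_j$. Since $V_{\Omega,X}$ is plurisubharmonic on $X$ and vanishes on $\Omega$, the composition $V_{\Omega,X}\circ f$ is subharmonic on $\Sigma\setminus f^{-1}(H)$, equals $0$ near $b\Sigma$ (because $f^{-1}(\Omega)$ is an open neighborhood of $b\Sigma$ in $\ol\Sigma$), and, in a local holomorphic coordinate $\zeta$ at $p_j$ with $\zeta(p_j)=0$, satisfies $V_{\Omega,X}\circ f(\zeta)\le -m_j\log|\zeta|+C$ as $\zeta\to0$ by the logarithmic growth estimate built into the definition of $\cL_X$. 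Hence $V_{\Omega,X}\circ f+G_{\Sigma,f^*H}$ is subharmonic on $\Sigma\setminus f^{-1}(H)$, bounded above near each $p_j$, so it extends subharmonically across $f^{-1}(H)$; it is $\le0$ on $b\Sigma$, and the maximum principle gives $V_{\Omega,X}\circ f\le -G_{\Sigma,f^*H}$ on $\Sigma$. Taking the infimum over all $(\Sigma,f,C)\in\cB$ and all $\zeta\in\Sigma$ with $f(\zeta)=x$ yields $V_{\Omega,X}(x)\le\Psi_\cB(x)$ for every $x\in X$.

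Next I would use the three properties of $\Psi_\cB$ recorded in the discussion preceding the theorem, which follow from goodness of $\cB$ (continuous local families of curves, inclusion of the constant curves $\ol\D\to z\in\Omega$, logarithmic growth of the construction): $\Psi_\cB$ is upper semicontinuous on $X$, vanishes on $\Omega$, and has at most logarithmic growth at infinity. Applying Theorem~\ref{Poletsky-Rosay} to $\Psi_\cB$ on the locally irreducible complex space $X$, its Poisson envelope $\wh{\Psi_\cB}$ — which is exactly the right-hand side of the asserted formula — is the largest plurisubharmonic minorant of $\Psi_\cB$ on $X$. Since $\wh{\Psi_\cB}\le\Psi_\cB$, it has at most logarithmic growth at infinity and is $\le0$ on $\Omega$, hence it lies in $\cL_X$ and is $\le0$ there; by the maximality defining $V_{\Omega,X}$ we conclude $\wh{\Psi_\cB}\le V_{\Omega,X}$. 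Conversely, $V_{\Omega,X}$ is plurisubharmonic and, by the first step, a minorant of $\Psi_\cB$, so $V_{\Omega,X}\le\wh{\Psi_\cB}$. Combining the two inequalities gives $V_{\Omega,X}=\wh{\Psi_\cB}$, the claimed identity.

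I expect the only genuinely delicate point to be the curve-wise domination of the first step. The subharmonicity of $V_{\Omega,X}\circ f$ off the poles is automatic once we know $V_{\Omega,X}$ is plurisubharmonic on $X$, but this last fact — for $X$ singular (though locally irreducible) and $\Omega$ open — rests on the pluripotential theory of complex spaces (one verifies $V_{\Omega,X}=V_{\Omega,X}^*$ using that $V_{\Omega,X}$ already vanishes on the open set $\Omega$ and has logarithmic growth), and it is also what makes the final inequality $V_{\Omega,X}\le\wh{\Psi_\cB}$ legitimate. The remaining care goes into the logarithmic pole estimate at the points of $f^{-1}(H)$, which is just the unwinding of membership in $\cL_X$ along the curve, and into checking the three properties of $\Psi_\cB$ from goodness of $\cB$; both are routine adaptations of the corresponding steps for Theorem~\ref{LSThm2}.
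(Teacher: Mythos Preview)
Your proposal is correct and follows essentially the same route as the paper: establish $V_{\Omega,X}\le\Psi_\cB$ by the Green-function/maximum-principle argument on each curve, then invoke Theorem~\ref{Poletsky-Rosay} to identify $EP_{\Psi_\cB}$ as the largest plurisubharmonic minorant of $\Psi_\cB$, and conclude by maximality of $V_{\Omega,X}$. You are in fact a bit more explicit than the paper in flagging the need for $V_{\Omega,X}$ itself to be plurisubharmonic (so that the inequality $V_{\Omega,X}\le EP_{\Psi_\cB}$ is legitimate), which the paper takes for granted.
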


\begin{exa}
An explicit example of a good family $\cB$ that one can use in 
this theorem is obtained by taking all constant discs in $\Omega$,
together with all transverse intersections $L=\Lambda\cap \ol X$
that intersect $\Omega$, where $\Lambda\cong \P^{n-k+1}$ is a projective 
linear subspace of $\P^n$ of dimension $n-\dim X+1$, and removing 
from each such closed curve $L$ a finite family of pairwise 
disjoint, closed, smoothly bounded discs 
$\ol\Delta_1,\ldots,\ol\Delta_m \subset \Omega$  
whose boundaries $b\Delta_j$ belong to the regular locus 
$L_{\reg}$ of $L$. 
The difference $C=L\setminus \bigcup_{j=1}^m \ol\Delta_j$
is then a complex curve in $X$ with smooth boundary 
$bC=\bigcup_{j=1}^m b\Delta_j$ contained in $\Omega$.
\qed\end{exa}

\begin{prob}
Assume that $\Omega\subset X$ are as in Theorem \ref{Maximalfunction-variety}.
Let $\cB$ consist of all complex curves 
$f\colon \ol\Sigma\to \ol C\subset  X$, where $\ol\Sigma$ is a finite 
bordered Riemann surface and $f$ is a holomorphic map 
such that $f(b\Sigma)\subset \Omega$. 
Is the maximal function $V_{\Omega,X}$ then given by 
the Lempert type formula
\[	V_{\Omega,X}(x) = \inf\left\{ - G_{\Sigma,f^*H}(\zeta)\colon 
	  (\ol\Sigma,f) \in\cB,\ f(\zeta)=x \in X\right\} \ ?
\]
\end{prob}

\subsection*{Acknowledgements}
The authors wish to thank Finnur L\'arusson for his suggestion to consider also 
other disc functionals by the methods developed in \cite{DF2} in the context 
of the Poisson functional, and for his proposal to include Lemma \ref{lifting} 
concerning the lifting of discs into a desingularization.
Research on this paper was supported in part by 
grants P1-0291 and J1-2152, Republic of Slovenia.

\end{document}